\documentclass[reqno]{amsart} 


\usepackage{graphicx, amsmath, amssymb, amsfonts, amsthm, stmaryrd, amscd}
\usepackage[usenames, dvipsnames]{xcolor}

\usepackage{enumerate}
\usepackage[normalem]{ulem}

\usepackage{ifpdf}
\ifpdf \RequirePackage[pdftex]{hyperref} \fi
\usepackage{xparse}

\usepackage[all]{xy}
\usepackage{enumerate}

\makeatletter
\newcommand*{\transpose}{%
  {\mathpalette\@transpose{}}%
}
\newcommand*{\@transpose}[2]{%
  \raisebox{\depth}{$\m@th#1\intercal$}%
}
\makeatother

\makeatletter
\newcommand*{\da@rightarrow}{\mathchar"0\hexnumber@\symAMSa 4B }
\newcommand*{\da@leftarrow}{\mathchar"0\hexnumber@\symAMSa 4C }
\newcommand*{\xdashrightarrow}[2][]{%
  \mathrel{%
    \mathpalette{\da@xarrow{#1}{#2}{}\da@rightarrow{\,}{}}{}%
  }%
}
\newcommand{\xdashleftarrow}[2][]{%
  \mathrel{%
    \mathpalette{\da@xarrow{#1}{#2}\da@leftarrow{}{}{\,}}{}%
  }%
}
\newcommand*{\da@xarrow}[7]{%
  \sbox0{$\ifx#7\scriptstyle\scriptscriptstyle\else\scriptstyle\fi#5#1#6\m@th$}%
  \sbox2{$\ifx#7\scriptstyle\scriptscriptstyle\else\scriptstyle\fi#5#2#6\m@th$}%
  \sbox4{$#7\dabar@\m@th$}%
  \dimen@=\wd0 %
  \ifdim\wd2 >\dimen@
    \dimen@=\wd2 %
  \fi
  \count@=2 %
  \def\da@bars{\dabar@\dabar@}%
  \@whiledim\count@\wd4<\dimen@\do{%
    \advance\count@\@ne
    \expandafter\def\expandafter\da@bars\expandafter{%
      \da@bars
      \dabar@ 
    }%
  }%
  \mathrel{#3}%
  \mathrel{%
    \mathop{\da@bars}\limits
    \ifx\\#1\\%
    \else
      _{\copy0}%
    \fi
    \ifx\\#2\\%
    \else
      ^{\copy2}%
    \fi
  }%
  \mathrel{#4}%
}
\makeatother

\usepackage{mathtools}

\DeclareMathOperator{\SL}{SL}

\DeclareMathOperator{\GL}{GL}

\DeclareMathOperator{\SO}{SO}

\DeclareMathOperator{\ad}{ad}

\def\eps{\varepsilon}

\DeclareMathOperator{\PGL}{PGL}

\DeclareMathOperator{\trace}{trace}

\def\O{\operatorname{O}}

\theoremstyle{plain} \newtheorem{theorem} {Theorem}   \newtheorem{proposition} [theorem] {Proposition} 

\theoremstyle{definition}   
\newtheorem{example} [theorem] {Example}

            \newtheorem{remark} [theorem] {Remark}

\newtheoremstyle{itplain} 
{6pt}                    
{5pt\topsep}                    
{\itshape}                   
{}                           
{\itshape}                   
{.}                          
{5pt plus 1pt minus 1pt}                       
{}  

\theoremstyle{itplain} 
\newtheorem{lemma}[theorem]{Lemma}

\newtheorem*{lemma*}{Lemma}
\newtheorem*{proposition*}{Proposition}
\newtheorem*{definition*}{Definition}
\newtheorem*{example*}{Example}

\newtheorem*{results*}{Results}

\usepackage[displaymath,textmath,sections,graphics]{preview}
\PreviewEnvironment{align*}
\PreviewEnvironment{multline*}
\PreviewEnvironment{tabular}
\PreviewEnvironment{verbatim}
\PreviewEnvironment{lstlisting}
\PreviewEnvironment*{frame}
\PreviewEnvironment*{alert}
\PreviewEnvironment*{emph}
\PreviewEnvironment*{textbf}
\PreviewEnvironment*{pn}

\begin{document}

\author{Paul D. Nelson}


\address{Aarhus University, Ny Munkegade 118,
  DK-8000 Aarhus, Denmark}
\email{paul.nelson@math.au.dk}

\subjclass[2010]{Primary 58J51; Secondary 11F67, 11F12}

\title[The subconvexity-QUE implication for $\GL_2$]{Soft bounds for local triple products and the subconvexity-QUE implication for $\mathrm{GL}_2$}

\thanks{The author is supported by a research grant (VIL54509) from VILLUM FONDEN}

\begin{abstract}
  We give a soft proof of a uniform upper bound for the local factors in the triple product formula, sufficient for deducing effective and general forms of quantum unique ergodicity (QUE) from subconvexity.
\end{abstract}

\maketitle

\section{Introduction}\label{sec:cq72y61z2j}

\subsection{Overview}\label{sec:cq72zil9ow}

There are by now many results relating quantum unique ergodicity for $\GL_2$ automorphic forms to subconvex bounds for triple product $L$-functions.  The source of the relationship is the triple product formula, established in a general form by Kudla--Harris \cite{harris-kudla-1991} and Ichino \cite{MR2449948}.  A special case is that for $L^2$-normalized even Hecke--Maass cusp forms $\varphi_1, \varphi_2, \varphi_3$ on $\SL_2(\mathbb{Z})$, there is a \emph{local factor} $c_\infty \geq 0$, given by an archimedean integral, so that the following relation holds between triple product integrals and values of triple product $L$-functions (excluding archimedean factors):
\begin{equation}\label{eq:cq20knv1tn}
  \left\lvert   \int_{\SL_2(\mathbb{Z}) \backslash \mathbb{H}}
    \varphi_1 \varphi_2 \varphi_3  \, \frac{d x \, d y}{y ^2}\right\rvert^2
  = \frac{1}{8}
  c_\infty \frac{L(\tfrac{1}{2}, \varphi_1 \times \varphi_2 \times \varphi_3)}{\prod_j L(1, \ad \varphi_j)}.
\end{equation}
The general case applies to factorizable vectors $\varphi_1, \varphi_2, \varphi_3$ in automorphic representations $\pi_1, \pi_2, \pi_3$ on $\GL_2$ over a number field, and involves a product of local factors $c_{\mathfrak{p}}$ indexed by finitely many places.  Knowing the sizes of the local factors allows us to relate bounds for triple product integrals to bounds for triple product $L$-values.

Watson \cite[Theorem 3]{watson-2008} determined the local factor $c_\infty$ in \eqref{eq:cq20knv1tn} precisely, showing that it is equal to the corresponding ratio of archimedean $L$-factors.  The case relevant for quantum unique ergodicity is when $\varphi_3$ is fixed, $\varphi_2 = \overline{\varphi_1}$, and the eigenvalue $\lambda$ of $\varphi_1$ tends to infinity.  In that case, bounds for the integrals
\begin{equation}\label{eq:cq20065vcq}
  \int_{\SL_2(\mathbb{Z}) \backslash \mathbb{H}}
  \lvert \varphi_1 \rvert^2 \varphi_3 \, \frac{d x \, d y}{y^2},
\end{equation}
together with their counterparts when $\varphi_3$ is Eisenstein, capture the $L^2$-mass distribution of $\varphi_1$.  Watson's formula and Stirling's asymptotics yield $c_\infty \asymp \lambda^{- 1/2}$, while the convexity or ``trivial'' bound for the ratio of $L$-functions in \eqref{eq:cq20knv1tn} is $\O(\lambda^{1/2 + \eps})$, so any ``subconvex'' improvement $\O(\lambda^{1/2 - \delta})$ in the exponent translates to an effective rate of equidistribution for the $L^2$-mass of $\varphi_1$.  Such improvements remain unavailable outside special cases \cite{Sar01}, but equidistribution with an ineffective rate has meanwhile been proved by other means \cite{MR2195133, MR2680500}; see also \cite{MR2680499}.

Luo--Sarnak \cite{MR1361757} applied a similar relationship when $\varphi_1$ is an Eisenstein series.  Many authors have since established further relationships: for the cotangent bundle of a hyperbolic surface \cite{MR1313792, MR4879363}, the setting of Hecke eigenforms on the sphere \cite{MR2018269}, hyperbolic $3$-folds \cite{MR3219586}, and in level aspects \cite{PDN-HQUE-LEVEL, PDN-AP-AS-que, nelson-padic-que, MR3801500, HNSminimal}.  A key step in each of these works has been to determine (or estimate) the local factors, using various tools: formulas for Mellin transforms of products of Bessel functions, hypergeometric identities, properties of epsilon factors of supercuspidal representations, and so on.

The purpose of this note is to give a uniform upper bound (Theorem~\ref{theorem:cq2y88479a}) for the local factors, valid in all aspects (eigenvalue, weight, level, etc.).  Roughly speaking, in the above setting, we show that
\begin{equation*}
  \prod_{\mathfrak{p}} c_{\mathfrak{p}} \ll_{\varphi_3} C(\pi_1 \times \tilde{\pi}_1)^{- 1/4} C(\pi_2 \times \tilde{\pi}_2)^{- 1/4}
\end{equation*}
with $C(\dotsb)$ the analytic conductor (see \S\ref{sec:cq20musxtb}) and with explicit dependence upon the $K$-type of $\varphi_3$ (polynomial or better).  From this estimate, it is immediate that subconvexity in the $\pi_1$-aspect for $L(\tfrac{1}{2}, \pi_1 \times \tilde{\pi}_1 \times \pi_3)$ implies general effective forms of quantum unique ergodicity (see e.g.\ Theorem~\ref{theorem:cq2018exw2}).

The bounds obtained here are new in some classical cases, e.g., for the generalization of \eqref{eq:cq20knv1tn} to weight vectors with all three weights nonzero (compare with \cite{watson-2008, MR2982336, 2013arXiv1303.6972S, 2018arXiv180609767C, MR4193477, MR4879363}).  Another new feature is the uniformity of our estimates with respect to the $K$-type of the observable ($\varphi_3$ in the above discussion), avoiding the exponential loss present in previous works (see \cite[p.~969]{MR1465794} and \cite[Remarks 10.1 and A.8]{MR4879363}) and allowing applications to smooth observables in settings previously limited to $K$-finite ones (see Example~\ref{example:cq2010s6bn} and \S\ref{sec:cq72y5it3a}).

We do not evaluate the $c_{\mathfrak{p}}$ exactly.  This loss in precision is compensated by a gain in simplicity, as the proof uses only standard representation-theoretic tools: matrix coefficient bounds \cite{MR946351}, a positivity argument involving the complementary series, a linearization identity for matrix coefficient integrals \cite[Lemma 3.4.2]{michel-2009}, and the convexity principle for local Rankin--Selberg integrals (\cite[\S2.4]{PDN-AP-AS-que}, generalized from $\mathbb{Q}_p$ to all local fields).  While these ingredients are familiar, we hope it will be useful to record how they combine to give a soft deduction of quantum unique ergodicity from subconvexity.

We describe our main results and applications in more detail in the remainder of \S\ref{sec:cq72y61z2j}.  The rest of the paper is devoted to the proof of Theorem \ref{theorem:cq2y88479a}, following the outline given in \S\ref{sec:cq72y60xgx}.

\subsection{Notation and setup}\label{sec:cq72y62p45}

We begin with some notation.  The asymptotic notations $A = \O(B)$, $A \ll B$ and $B \gg A$ mean that $\lvert A \rvert \leq C \lvert B \rvert$ for some absolute constant $C$.  We use subscripted parameters, like in $A \ll_\eps B$, to indicate that $C$ may depend on those parameters.

For each local field $F$, we equip $\PGL_2(F)$ with a Haar measure in a standard way, recorded below in \S\ref{sec:cq2028ta9m}.  For instance, when $F$ is non-archimedean, this measure assigns volume one to each maximal compact subgroup.  We write $K$ for the standard maximal compact subgroup of $\GL_2(F)$ or $\PGL_2(F)$, according to context.  Given a vector $u$ in a representation of $\GL_2(F)$, we set
\begin{equation*}
  \dim K u := \text{dimension of the span of the $K$-orbit of $u$.}
\end{equation*}

  We now let $F$ be a number field, and denote by $\mathbb{A}$ its adele ring.  We equip $[\PGL_2] := \PGL_2(F) \backslash \PGL_2(\mathbb{A})$ with the probability Haar measure.  Let $\pi$ be a unitary cuspidal automorphic representation for $\GL_2$ over $F$ (see e.g.\ \cite{MR1431508}).  We equip it with the Petersson inner product, given by integration over $[\PGL_2]$.  It factors as a restricted tensor product, over the places $\mathfrak{p}$ of $F$, of generic irreducible unitary representations $\pi_{\mathfrak{p}}$ of $\GL_{2}(F_{\mathfrak{p}})$ -- the ``local components'' of $\pi$.  We may speak of factorizable vectors $\varphi = \otimes_{\mathfrak{p}} \varphi_{\mathfrak{p}}$, whose matrix coefficients likewise factor as $\langle g \varphi, \varphi \rangle = \prod_{\mathfrak{p}} \langle g_{\mathfrak{p}} \varphi_{\mathfrak{p}}, \varphi_{\mathfrak{p}} \rangle$.  The generalized Ramanujan conjecture predicts that each local component $\pi_\mathfrak{p}$ is tempered; towards that, it is known \cite{MR2811610} that they are $\vartheta$-tempered (see \S\ref{sec:cq2zo3v0dy} for definitions) with $\vartheta \leq 7/64$.

  We now recall the triple product formula in the form given by Ichino \cite{MR2449948}.  Let $\pi_1, \pi_2, \pi_3$ be unitary cuspidal automorphic representations for $\GL_2$ over $F$ whose product of central characters is trivial.  Let $\varphi_1, \varphi_2, \varphi_3$ be factorizable vectors, thus $\varphi_j = \otimes_{\mathfrak{p}} \varphi_{j, \mathfrak{p}}$ with $\varphi_{j, \mathfrak{p}} \in \pi_{j, \mathfrak{p}}$.  Let $S$ be a large enough finite set of places of $F$, containing all archimedean places as well as any places at which any of our vectors are ramified.  Ichino's formula asserts that
  \begin{equation}\label{eq:cq201cgwq7}
  \left\lvert   \int_{[\PGL_2]}
    \varphi_1 \varphi_2 \varphi_3 \right\rvert^2
  = c_{F}^{(S)}
  \left( \prod_{\mathfrak{p} \in S} c_\mathfrak{p} \right)
  \frac{L^{(S)}(\pi_1 \times \pi_2 \times \pi_3, \tfrac{1}{2})}{\prod_{j=1}^{3} L^{(S)}(1, \ad \pi_j)}.
\end{equation}
Here $c_{F}^{(S)} \asymp_F 1$, $L ^{(S)}$ denotes the contribution of Euler factors outside $S$, and $c_{\mathfrak{p}}$ is the local integral of matrix coefficients
\begin{equation}\label{eq:cq201b36xn}
  c_{\mathfrak{p}} = \int_{\PGL_2(F_\mathfrak{p})} \Bigl( \prod_{j = 1,2,3} \langle g \varphi_{j, \mathfrak{p}}, \varphi_{j, \mathfrak{p}} \rangle \Bigr) \, d g.
\end{equation}
When some or all of the forms are Eisenstein, Rankin--Selberg unfolding gives a similar identity (for the regularized integral, in the case of three Eisenstein series) after replacing adjoint $L$-values by their residues and modifying $c_{F,S}$ slightly (see \cite[\S4.4]{michel-2009}).  The integrals \eqref{eq:cq201b36xn} converge in view of the noted $7/64$-temperedness (indeed, any bound $< 1/6$ would suffice).  The case relevant for quantum unique ergodicity is when $\pi_2 = \tilde{\pi}_1$.

\subsection{Local triple product estimates}

Our main result is the following general upper bound for the local factors $c_{\mathfrak{p}}$.  The ``$F$'' below should be thought of as one of the completions ``$F_\mathfrak{p}$'' arising above, and one can simply take $\vartheta_j = 7/64$.
\begin{theorem}\label{theorem:cq2y88479a}
  Let $\vartheta_1, \vartheta_2, \vartheta_3 \in [0,1/2)$ with $2\max(\vartheta_1, \vartheta_2) + \vartheta_3 < 1/2$.  Set
  \begin{equation*}
    \Theta := \frac{\vartheta_1 + \vartheta_2}{1 - 2 \vartheta_3} < 1/2,
  \end{equation*}
  and let $\eps >0$.  Let $F$ be a local field.  Let $\pi_1, \pi_2, \pi_3$ generic irreducible unitary representations of $\GL_2(F)$.  Assume that $\pi_j$ is $\vartheta_j$-tempered (see \S\ref{sec:cq2zo3v0dy}) and that the product of central characters is trivial.  Let $v_j, v_j' \in \pi_j$ be unit vectors.  Then, with $C(\dotsb)$ the analytic conductor (see \S\ref{sec:cq20musxtb}), we have
  \begin{equation}\label{eq:cq2zpduv15}
    \int_{\PGL_2(F)}
    \Bigl( \prod_{j = 1,2,3} \langle g v_j, v_j' \rangle \Bigr)
    \, d g
    \ll_{\vartheta_1, \vartheta_2, \vartheta_3, \eps}
    \frac{\left( \dim K v_3 \right)^{\Theta + \eps} \left( \dim K v_3' \right)^{\Theta + \eps}}{C(\pi_1 \times \tilde{\pi}_1)^{1/4}
      C(\pi_2 \times \tilde{\pi}_2)^{1/4}}.
  \end{equation}
\end{theorem}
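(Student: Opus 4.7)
Write $f_j(g) := \langle g v_j, v_j' \rangle$. The plan is to apply the four tools flagged in the introduction in sequence: a Cauchy--Schwarz split isolating the roles of $\pi_1$ and $\pi_2$, a Cowling--Haagerup--Howe bound dominating $f_3$ by a positive-definite complementary-series matrix coefficient, the Michel--Venkatesh linearization identity, and the convexity principle applied separately to each of $\pi_1, \pi_2$.

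For parameters $\alpha_1, \alpha_2 \ge 0$ with $\alpha_1 + \alpha_2 = 1$, I first rewrite
\[
I = \int_{\PGL_2(F)} \bigl(f_1\, |f_3|^{\alpha_1}\bigr) \bigl(f_2\, |f_3|^{\alpha_2}\,\sgn f_3\bigr)\,dg
\]
and apply Cauchy--Schwarz to obtain $|I|^2 \le J_1 J_2$ with $J_j := \int |f_j(g)|^2\, |f_3(g)|^{2\alpha_j}\,dg$. The matrix-coefficient bound of \cite{MR946351} applied to $\pi_3$ yields $|f_3(g)| \le N\,\Xi_{\vartheta_3}(g)$, where $\Xi_{\vartheta_3}$ is the spherical matrix coefficient of the $\vartheta_3$-complementary series (a positive-definite function) and $N := (\dim K v_3)^{1/2}(\dim K v_3')^{1/2}$. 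Interpolating with the trivial estimate $|f_3| \le \|v_3\| \|v_3'\| = 1$ gives, for any $0 \le \lambda_j \le 2\alpha_j$, the inequality $|f_3|^{2\alpha_j} \le N^{\lambda_j}\, \Xi_{\vartheta_3}^{\lambda_j}$. Hence
\[
|I|^2 \le N^{\lambda_1 + \lambda_2}\,\prod_{j=1,2} \int_{\PGL_2(F)} |f_j(g)|^2\,\Xi_{\vartheta_3}(g)^{\lambda_j}\,dg.
\]

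Next I interpret each remaining integral representation-theoretically. The integrand $|f_j|^2$ is the diagonal matrix coefficient of $v_j \otimes \overline{v_j'} \in \pi_j \otimes \tilde\pi_j$ restricted to the diagonal $\PGL_2(F)$, and $\Xi_{\vartheta_3}^{\lambda_j}$ is dominated by the spherical matrix coefficient of a complementary-series representation $\sigma_j$ whose parameter is determined by $\vartheta_3$ and $\lambda_j$. The Michel--Venkatesh linearization identity \cite[Lemma~3.4.2]{michel-2009} rewrites $\int |f_j|^2\,\Xi_{\vartheta_3}^{\lambda_j}\,dg$ as a local Rankin--Selberg integral pairing $\pi_j \times \tilde\pi_j$ with $\sigma_j$, and the convexity principle (\cite[\S 2.4]{PDN-AP-AS-que}, extended here from $\mathbb{Q}_p$ to arbitrary local fields) bounds this by $C_\eps \cdot C(\pi_j \times \tilde\pi_j)^{-1/2 + \eps}$, with polynomial control on the test-vector data and -- crucially -- with no loss to $\dim K v_j$.

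Finally, I optimize. The local Rankin--Selberg integral converges provided $\lambda_j > 4\vartheta_j/(1-2\vartheta_3)$, which the hypothesis $2\max(\vartheta_1,\vartheta_2) + \vartheta_3 < 1/2$ permits with $\lambda_j < 1 = 2\alpha_j$ for the symmetric choice $\alpha_1 = \alpha_2 = 1/2$. Taking $\lambda_j$ just above this threshold makes $\lambda_1 + \lambda_2 = 4\Theta + O(\eps)$; plugging into the displayed bound and taking square roots converts the factor $N^{\lambda_1 + \lambda_2}$ into $(\dim K v_3)^{\Theta + \eps}(\dim K v_3')^{\Theta + \eps}$, producing exactly the claimed estimate. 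The main obstacle I anticipate is the convexity principle in the previous paragraph: establishing the local Rankin--Selberg bound over every local field $F$ with only polynomial dependence on the $K$-type of the test vector derived from $v_3, v_3'$ is the technical heart of the argument, and it is precisely what forces one to replace the exact-formula methods of prior work (which lose exponentially in the $K$-type) by the soft positivity-based argument hinted at in the introduction.
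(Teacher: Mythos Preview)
Your outline coincides with the paper's proof: Cauchy--Schwarz separates $\pi_1$ and $\pi_2$; interpolation between the Cowling--Haagerup--Howe bound and the trivial bound replaces $|f_3|$ by a constant multiple of a positive spherical function $\Xi_s$; Michel--Venkatesh linearization converts $\int |f_j|^2\,\Xi_s$ into a product $I(s,W_j)\,I(1-s,W_j)$ of local Rankin--Selberg integrals; and Phragm\'en--Lindel\"of, with input the functional equation and the trivial bound $I(1,W_j)=\lVert W_j\rVert^2=1$, controls each factor by $C(\pi_j\times\tilde\pi_j)^{(s-1)/2}$. Your parameter bookkeeping (the constraint $\lambda_j>4\vartheta_j/(1-2\vartheta_3)$ and the resulting exponent $\Theta$) also matches.

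There is one genuine slip and one misdiagnosis. The slip: your claim that ``$|f_j|^2$ is the diagonal matrix coefficient of $v_j\otimes\overline{v_j'}$'' is false. One computes $|\langle g v_j, v_j'\rangle|^2=\langle g(v_j\otimes\bar v_j),\,v_j'\otimes\bar v_j'\rangle$, a \emph{non}-diagonal coefficient; the diagonal coefficient of $v_j\otimes\bar v_j'$ is instead $\langle g v_j,v_j\rangle\,\overline{\langle g v_j',v_j'\rangle}$. The paper resolves this by inserting a polarization step (``by Cauchy--Schwarz and polarization'') to reduce to $v_j=v_j'$ before invoking the linearization lemma, which in the form used is only stated for $|\langle gW,W\rangle|^2$. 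You should do the same, or else check that linearization and the convexity bound extend to off-diagonal coefficients. The misdiagnosis: your closing paragraph locates the main obstacle in controlling the $K$-type of ``the test vector derived from $v_3,v_3'$'' inside the Rankin--Selberg bound. But once $|f_3|$ has been replaced by $\Xi_s$, the third vector is the $K$-invariant spherical vector $f_s$; no trace of $\dim K v_3$ or $\dim K v_3'$ survives to that stage. All the $K$-type dependence is already captured by your factor $N^{\lambda_1+\lambda_2}$, and the convexity argument itself is a clean Phragm\'en--Lindel\"of needing only $\lVert W_j\rVert=1$.
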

We emphasize that the implied constant in \eqref{eq:cq2zpduv15} satisfies only the indicated dependencies; in particular, it is uniform in $F$.
\begin{example}\label{example:cq2010s6bn}
  Suppose $F = \mathbb{R}$, $\pi_2 = \tilde{\pi}_1$, $\pi_1$ (resp.\ $\pi_3$) is a principal series representation attached to an even Maass form of eigenvalue $1/4 + r^2$ (resp.\ $1/4 + t^2$), and $v_1, v_2, v_3$ are vectors of weights $k, 0, -k$, respectively, for some $k \in \mathbb{Z}$.  Let $c$ denote the left hand side of \eqref{eq:cq2zpduv15}.  We have $C(\pi_1 \times \tilde{\pi}_1) \asymp 1 + r^2$ and $\dim K v_3 = 1$, so Theorem \ref{theorem:cq2y88479a} gives
  \begin{equation}\label{eq:cq201zufzj}
    c \ll \frac{1}{(1 + \lvert r \rvert)}.
  \end{equation}
  By specializing to $k = 0$, this recovers the ``equidistribution from subconvexity'' implication in Watson's thesis.  Returning to the case of general $k$, let us compare \eqref{eq:cq201zufzj} with \cite[Corollary A.4]{MR4879363}, which gives
  \begin{equation}\label{eq:cq72zrtyqy}
    c \ll_k \frac{1}{(1 + \lvert 2 r - t \rvert)^{1/2}(1 + \lvert 2 r + t \rvert)^{1/2}(1 + \lvert t \rvert)}.
  \end{equation}
  The main regime of interest is when $r \rightarrow \infty$ with $t$ essentially fixed (e.g., $\lvert t \rvert \leq \tfrac{1}{3} \lvert r \rvert$), where the above simplifies to
  \begin{equation}\label{eq:cq201zuspg}
    c \ll_k \frac{1}{(1 + \lvert r \rvert)(1 + \lvert t \rvert)}.
  \end{equation}
  Note that \eqref{eq:cq201zufzj} is weaker than \eqref{eq:cq201zuspg} in its dependence on $t$, but is uniform in $k$.  By contrast, as explained in \cite[Remarks 10.1 and A.8]{MR4879363} (see also \cite[p.~969]{MR1465794}), the proof of \eqref{eq:cq201zuspg} yields an exponential dependence on $k$.  For this reason, the main result of \cite{MR4879363} -- that subconvexity implies effective quantum unique ergodicity for microlocal lifts -- was established only for $K$-finite observables.  Using \eqref{eq:cq201zufzj} and its analogue for $\sigma$ in the discrete series, we may extend that result to observables that are merely smooth (see \S\ref{sec:cq72y5it3a}).  Similarly, using Theorem \ref{theorem:cq2y88479a} at a complex place, one may establish an effective version of \cite[Theorem 3]{MR3219586} involving smooth observables rather than just $K$-finite ones.
\end{example}
\begin{remark}\label{remark:cq21z2qqie}
  In typical applications of Theorem \ref{theorem:cq2y88479a}, the representations $\pi_1$ and $\pi_2$ are tempered, so one can take $\Theta = 0$.  Indeed, $\pi$ is tempered provided that $C(\pi \times \tilde{\pi})$ is large enough (e.g., $> 1$ when $F$ is non-archimedean).  When $\pi_1$ and $\pi_2$ are square-integrable, the proof simplifies and one can take $\Theta = \eps = 0$.  To see this, we use the polarization identity and the Cauchy--Schwarz inequality to reduce to bounding $\int_{\PGL_2(F)} \left\lvert \langle g v, v \rangle \right\rvert^2 \, d g$ for $v$ a unit vector in the square-integrable irreducible representation $\pi$.  This last integral is the reciprocal of the formal degree of $\pi$, which is known to be $\asymp C(\pi \times \tilde{\pi})^{1/2}$, completing the proof.  One may verify this last estimate using local Rankin--Selberg integrals \cite{MR729755}.  The proof of Theorem \ref{theorem:cq2y88479a} uses local Rankin--Selberg integrals in a similar way, and so may be understood informally as a deformation of the formal degree calculation.
\end{remark}
\begin{remark}\label{remark:cq2467ogbw}
  Suppose $\pi_3$ is fixed and $\pi_1 = \tilde{\pi_2}$ varies.  In many cases (see e.g.\ \cite{MR4879363}, \cite[Lemma 4.4]{PDN-HQUE-LEVEL}, \cite[Remark 50]{nelson-padic-que}, \cite[Theorem 1.2]{HNSminimal}), the upper bound \eqref{eq:cq2zpduv15} is realized by some vectors -- typically those chosen ``minimally'' in some sense.  It fails to be sharp for more general vectors, e.g., those varying inside a fixed representation \cite{MR1859598}, \cite[\S3.6--3.7]{michel-2009} or (unbalanced) newvectors \cite{PDN-AP-AS-que}.
\end{remark}
\begin{remark}
  We briefly compare with \cite[\S3.5]{michel-2009}, which gives a soft proof of an upper bound formulated roughly as follows: if $F$ is non-archimedean and $\pi_1, \pi_2, \pi_3$ are all tempered, then the left hand side of \eqref{eq:cq2zpduv15} is majorized ``up to epsilons'' by $\lVert v_1 \rVert_{\mathfrak{U}} \lVert v_1' \rVert_{\mathfrak{U}} \lVert v_2 \rVert_{\mathfrak{U}} \lVert v_2' \rVert_{\mathfrak{U}}$, where $\lVert v \rVert_{\mathfrak{U}}^4 := \int_{K} \left\lvert \langle k v, v \rangle \right\rvert^2 \, d k$.  To apply this, we must estimate the norms $\lVert v \rVert_{\mathfrak{U}}$.  The simplest way we know how to do so for general $v$ is essentially as in the proof of Theorem \ref{theorem:cq2y88479a}.
\end{remark}

\subsection{Global triple product estimates}

By applying Theorem \ref{theorem:cq2y88479a} to each local factor $c_{\mathfrak{p}}$ in \eqref{eq:cq201cgwq7} and its Eisenstein analogue, we obtain the expected global consequence:
\begin{theorem}\label{theorem:cq2y87nn4h}
  Let $F$ be a number field.  Let $\pi$ be a unitary cuspidal automorphic representation of $\GL_2(\mathbb{A})$, equipped with the Petersson inner product.  Let $\sigma$ be a generic automorphic representation of $\PGL_2(\mathbb{A})$ that contributes to the spectral decomposition of $L^2$, thus $\sigma$ is either
  \begin{itemize}
  \item unitary cuspidal, equipped with the Petersson inner product, or
  \item a space of Eisenstein series obtained via normalized induction from a unitary Hecke character, equipped with the inner product given by integration in the induced model.
  \end{itemize}
  Let $\varphi_1, \varphi_2 \in \pi$ and $\Psi \in \sigma$ be factorizable unit vectors.  Let $S$ denote the set of places that are either archimedean or at which any of these vectors is ramified.  Then, for some $c \geq 1$ depending at most on $F$,
  \begin{equation}\label{eq:cq2zo308i9}
    \left\lvert   \int_{[\PGL_2]}
      \varphi_1 \overline{\varphi_2} \Psi  \right\rvert^2
    \leq
    c^{\lvert S \rvert + 1}
    \frac{\dim K \Psi}{C(\pi \times \tilde{\pi})^{1/2}}
    \frac{L(\tfrac{1}{2}, \pi \times \tilde{\pi} \times \sigma)}{L(1, \ad \pi)^2 L^*(1, \ad \sigma)}.
  \end{equation}
  Here $K$ is the standard maximal compact subgroup of $\PGL_2(\mathbb{A})$, $L(\dotsb)$ denotes the finite part of the $L$-function, and $L^\ast(\dotsb)$ indicates that a residue is taken when $\sigma$ is Eisenstein.
\end{theorem}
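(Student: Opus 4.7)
The plan is to combine Ichino's triple product formula \eqref{eq:cq201cgwq7} with the local bound in Theorem \ref{theorem:cq2y88479a}, then do some bookkeeping to convert between partial and finite-part $L$-functions. I would apply \eqref{eq:cq201cgwq7} (and its Eisenstein analogue, obtained via Rankin--Selberg unfolding as noted in the text, which introduces the residue $L^{\ast}(1,\ad\sigma)$) to the triple $(\pi_{1},\pi_{2},\pi_{3}) = (\pi,\tilde\pi,\sigma)$ with factorizable vectors $\varphi_{1}, \overline{\varphi_{2}},\Psi$. The constants $c_{F}^{(S)}\asymp_{F}1$ can be absorbed into the $c^{|S|+1}$ factor.

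Next, I would bound each local factor $c_{\mathfrak{p}}$ using Theorem \ref{theorem:cq2y88479a} with $\vartheta_{1}=\vartheta_{2}=7/64$ (valid by the Kim--Sarnak bound) and $\vartheta_{3}=7/64$ if $\sigma$ is cuspidal, $\vartheta_{3}=0$ if $\sigma$ is Eisenstein (since unitary principal series from unitary Hecke characters are tempered). In both cases $\Theta<1/2$, and we can fix $\eps>0$ small enough that $2\Theta+2\eps\leq 1$. Taking $v_{j}=v_{j}'$ at each place and noting that $C_{\mathfrak{p}}(\pi_{1}\times\tilde\pi_{1})=C_{\mathfrak{p}}(\pi_{2}\times\tilde\pi_{2})=C_{\mathfrak{p}}(\pi\times\tilde\pi)$, Theorem \ref{theorem:cq2y88479a} yields
\begin{equation*}
c_{\mathfrak{p}}\ll \frac{(\dim K_{\mathfrak{p}}\Psi_{\mathfrak{p}})^{2\Theta+2\eps}}{C_{\mathfrak{p}}(\pi\times\tilde\pi)^{1/2}}
\end{equation*}
with implied constant depending only on the choice of $\vartheta_{j}$ and $\eps$, hence absolute up to a factor per place of $S$.

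Taking the product over $\mathfrak{p}\in S$, the factors $C_{\mathfrak{p}}(\pi\times\tilde\pi)^{1/2}$ multiply to $C(\pi\times\tilde\pi)^{1/2}$ since $C_{\mathfrak{p}}=1$ at unramified places; and $\prod_{\mathfrak{p}\in S}(\dim K_{\mathfrak{p}}\Psi_{\mathfrak{p}})^{2\Theta+2\eps}=(\dim K\Psi)^{2\Theta+2\eps}\leq \dim K\Psi$, using $\dim K\Psi\geq 1$. Each per-place implied constant accumulates into a factor of the form $c^{|S|}$. Finally, I would convert $L^{(S)}$ to the finite part $L$ by observing that for each finite $\mathfrak{p}\in S$, the local $L$-factors $L_{\mathfrak{p}}(\tfrac{1}{2},\pi\times\tilde\pi\times\sigma)$ and $L_{\mathfrak{p}}(1,\ad\pi_{j})$ are bounded above and below by absolute constants (using $\vartheta$-temperedness to control the Euler factors uniformly at $s=1/2$ and $s=1$); the resulting ratios multiply to at most another factor $c^{|S|}$. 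Combining yields the estimate \eqref{eq:cq2zo308i9}.

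The argument is essentially bookkeeping once Theorem \ref{theorem:cq2y88479a} is in hand; the only genuine obstacles are (i) verifying that the constants from Theorem \ref{theorem:cq2y88479a} are indeed uniform across all local fields, which is built into that theorem's conclusion, and (ii) ensuring the Eisenstein analogue of Ichino's formula is applicable -- handled by invoking Rankin--Selberg unfolding as referenced after \eqref{eq:cq201b36xn}, together with the fact that local principal series induced from unitary characters are tempered, so Theorem \ref{theorem:cq2y88479a} applies without modification.
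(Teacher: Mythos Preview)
Your proposal is correct and follows essentially the same route as the paper's proof: apply Ichino's formula (and its Eisenstein analogue), bound each local factor via Theorem~\ref{theorem:cq2y88479a} with the $7/64$ temperedness input, multiply over $S$, and absorb the passage from $L^{(S)}$ to $L$ into $c^{|S|}$ using that the local $L$-factors at finite places in $S$ are $\asymp 1$. The paper's proof is just a terser version of exactly this, including the final use of $\Theta<1/2$ to replace $(\dim K\Psi)^{2\Theta+2\eps}$ by $\dim K\Psi$.
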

\begin{proof}
  The bound follows from Ichino's formula \cite{MR2449948}, its analogue for Eisenstein series \cite[\S4.4]{michel-2009} and Theorem \ref{theorem:cq2y88479a}.  In verifying the hypotheses of the latter, we may use the $\vartheta \leq 7/64 < 1/6$ bound of \cite{MR2811610}.  For cosmetic purposes, we have replaced each partial $L$-function $L^{(S)}$ by simply $L$.  Doing so has no effect on our estimate: the temperedness bounds for $\pi$ and $\sigma$ imply that the local $L$-factors at finite places in $S$ are each $\asymp 1$, so we may absorb them into the quantity $c$.  Finally, we have used that $\Theta < 1/2$ to simplify the dependence on $\dim K \Psi$.
\end{proof}

\begin{remark}
  In practice, one can often sharpen \eqref{eq:cq2zo308i9} by replacing $\dim K \Psi$ with $(\dim K \Psi)^\eps$, for the same reasons as in Remark~\ref{remark:cq21z2qqie}.
\end{remark}

\subsection{Subconvexity implies quantum unique ergodicity}\label{sec:cq73e71skw}
We obtain the following ``all aspects'' generalization of the implication given in Watson's thesis.

\begin{theorem}\label{theorem:cq2018exw2}
  Let $\pi_j$ be a sequence of unitary cuspidal automorphic representations of $\GL_2(\mathbb{A})$.  Assume that the set $S_{\pi_j}$ of places at which $\pi_j$ ramifies has cardinality bounded independent of $j$.  Assume that for some $A \geq 0$ and all $\sigma$ as in Theorem \ref{theorem:cq2y87nn4h}, we have
  \begin{equation}\label{eq:cq20154jaz}
    \frac{ L(\tfrac{1}{2}, \pi_j \times \tilde{\pi}_j \times \sigma) }{C(\pi_j \times \tilde{\pi}_j)^{1/2} C(\sigma)^A L(1, \ad \pi_j)^2} \rightarrow 0.
  \end{equation}
  Let $\varphi_j \in \pi_j$ be a sequence of factorizable unit vectors, unramified outside $S_{\pi_j}$.  Assume that each $\pi_j$ is non-dihedral, i.e., not isomorphic to any of its Hecke character twists.  Then the sequence $\lvert \varphi_j \rvert^2$ equidistributes with respect to Haar measure on $[\PGL_2]$: for each bounded measurable function $\Psi : [\PGL_2] \rightarrow \mathbb{C}$, we have
  \begin{equation}\label{eq:cq20155ph0}
    \int_{[\PGL_2]} \lvert \varphi_j \rvert^2 \Psi \rightarrow \int_{[\PGL_2]} \Psi.
  \end{equation}
\end{theorem}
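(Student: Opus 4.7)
The plan is to combine Theorem~\ref{theorem:cq2y87nn4h} with a spectral decomposition of bounded measurable functions on $[\PGL_2]$. By standard density and approximation, it suffices to establish \eqref{eq:cq20155ph0} for $\Psi$ a fixed factorizable $K$-finite smooth unit vector in a fixed automorphic representation $\sigma$ of the kind appearing in Theorem~\ref{theorem:cq2y87nn4h} (trivial, cuspidal, or Eisenstein). When $\sigma$ is trivial, both sides of \eqref{eq:cq20155ph0} equal $\int\Psi$ because $\|\varphi_j\|_2 = 1$. When $\sigma$ is nontrivial, $\int\Psi = 0$ and we must show $\int|\varphi_j|^2 \Psi \to 0$.

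Fix such a nontrivial $\sigma$ and $\Psi$, and apply Theorem~\ref{theorem:cq2y87nn4h} with $\varphi_1 = \varphi_2 = \varphi_j$:
\begin{equation*}
\left| \int_{[\PGL_2]} |\varphi_j|^2 \Psi \right|^2 \leq c^{|S_j| + 1} \frac{\dim K \Psi}{C(\pi_j \times \tilde\pi_j)^{1/2}} \cdot \frac{L(\tfrac{1}{2}, \pi_j \times \tilde\pi_j \times \sigma)}{L(1, \ad \pi_j)^2 L^*(1, \ad \sigma)}.
\end{equation*}
Here $S_j$ is the union of $S_{\pi_j}$, $S_\Psi$, and the archimedean places, which is of bounded cardinality independent of $j$ by the hypothesis on $S_{\pi_j}$ and the fixity of $\Psi$. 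Thus $c^{|S_j|+1}$ is bounded. With $\Psi$ and $\sigma$ fixed, the quantities $\dim K \Psi$, $L^*(1, \ad \sigma)$, and $C(\sigma)^A$ are all constants, and the bound collapses to a constant multiple of the quotient appearing in \eqref{eq:cq20154jaz}, which tends to $0$ by the subconvex hypothesis.

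The non-dihedrality of $\pi_j$ is used to guarantee that $\pi_j \times \tilde{\pi}_j \cong \mathbf{1} \boxplus \ad \pi_j$ with $\ad \pi_j$ a cuspidal automorphic representation of $\GL_3$. This ensures that $L(s, \pi_j \times \tilde{\pi}_j \times \sigma)$ has its expected analytic properties, that $L(1, \ad \pi_j)$ is a bona fide finite nonzero value comparable to the Rankin--Selberg residue, and hence that \eqref{eq:cq20154jaz} is a meaningful subconvex-style estimate. Without non-dihedrality, $\pi_j \times \tilde{\pi}_j$ could contain extra characters and the product $L$-function could acquire unwanted poles at $\tfrac{1}{2}$ for certain $\sigma$.

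The main obstacle, in my view, is not the application of Theorem~\ref{theorem:cq2y87nn4h}, which is essentially a one-line insertion, but rather the spectral-reduction step in the non-compact setting: ruling out escape of mass of $|\varphi_j|^2$ into the cusps. This requires applying Theorem~\ref{theorem:cq2y87nn4h} uniformly over Eisenstein $\sigma$ with varying analytic conductor, then summing and integrating the resulting bounds against a smooth cutoff. The polynomial tolerance $C(\sigma)^A$ in the hypothesis \eqref{eq:cq20154jaz} is precisely what permits this integration; the rest is a routine exercise in smoothing, Mellin inversion along the induction parameter, and invoking polynomial estimates for the spectral coefficients of $\Psi$.
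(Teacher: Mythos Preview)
Your overall strategy---approximate $\Psi$ spectrally and apply Theorem~\ref{theorem:cq2y87nn4h} to each generic constituent---matches the paper's. But there is a genuine gap in your spectral reduction, and it is precisely the gap that the non-dihedrality hypothesis is meant to close.

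You write that it suffices to test against $\Psi$ lying in $\sigma$ that is ``trivial, cuspidal, or Eisenstein.'' This omits part of the spectrum of $L^2([\PGL_2])$: the one-dimensional representations $\mathbb{C}\,\chi(\det)$ attached to \emph{nontrivial} quadratic Hecke characters $\chi$. Such $\sigma$ are not generic, so Theorem~\ref{theorem:cq2y87nn4h} does not apply to them, and your argument says nothing about $\int_{[\PGL_2]} \lvert \varphi_j \rvert^2 \chi(\det)$. This integral equals $\langle \varphi_j, \chi(\det)^{-1}\varphi_j\rangle$, where $\chi(\det)^{-1}\varphi_j$ lies in the twist $\pi_j \otimes \chi^{-1}$. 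It vanishes by orthogonality of distinct cuspidal representations \emph{precisely when} $\pi_j \not\cong \pi_j \otimes \chi^{-1}$, i.e., when $\pi_j$ is non-dihedral. That is the role of the hypothesis, as the paper states explicitly.

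Your explanation of non-dihedrality---ensuring cuspidality of $\ad\pi_j$ and good analytic behavior of $L(s,\pi_j\times\tilde\pi_j\times\sigma)$---is not what drives the argument. Even in the dihedral case, $L(1,\ad\pi_j)$ is a well-defined nonzero value, and the triple product $L$-function for generic $\sigma$ has no pole at $s=\tfrac{1}{2}$; the hypothesis \eqref{eq:cq20154jaz} remains perfectly meaningful. The obstruction in the dihedral case is not analytic but spectral: $\lvert\varphi_j\rvert^2$ can have a nonvanishing projection onto $\chi(\det)$, which does not decay and blocks equidistribution. Once you add this one-dimensional case to your list and dispose of it via the orthogonality argument above, your proof is complete and agrees with the paper's.
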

\begin{proof}
  This follows from Theorem \ref{theorem:cq2y87nn4h} by a standard approximation argument and spectral expansion, as in e.g.\ \cite[\S3.6]{PDN-AP-AS-que}.  Here the ``non-dihedral'' assumption tells us that $\lvert \varphi_j \rvert^2$ is orthogonal to the nontrivial one-dimensional representations $\mathbb{C} \chi(\det)$ attached to nontrivial quadratic Hecke characters $\chi$ (compare with \cite[\S3.3]{PDN-HMQUE}).
\end{proof}
\begin{remark}
  In interpreting \eqref{eq:cq20154jaz}, we recall the adjoint $L$-value lower bound (see \cite{MR1067982}, \cite{HL94}, \cite[\S 2.9]{2009arXiv0904.2429B})
  \begin{equation*}
    L(1, \ad \pi, 1) \gg_\eps C(\pi \times \tilde{\pi})^{-\eps},
  \end{equation*}
  which implies that \eqref{eq:cq20154jaz} follows from any estimate
  \begin{equation}\label{eq:cq21je2q0c}
    L(\tfrac{1}{2}, \pi \times \tilde{\pi} \times \sigma) \ll C(\pi \times \tilde{\pi})^{1/2 - \delta} C(\sigma)^{\O(1)}
  \end{equation}
  with $\delta > 0$.  Such estimates are called ``subconvex in $\pi$, with polynomial dependence on $\sigma$'' in view of the following consequence of the convexity bound and the inductive nature of local $\gamma$-factors (see \S\ref{sec:cq20musxtb} and \cite[\S3]{MR1990380}):
  \begin{equation*}
    L(\tfrac{1}{2}, \pi \times \tilde{\pi} \times \sigma) \ll_{\eps} C(\pi \times \tilde{\pi} \times \sigma)^{1/4 + \eps}
    \ll C(\pi \times \tilde{\pi})^{1/4 + 2 \eps} C(\sigma)^{\O(1)}.
  \end{equation*}

\end{remark}

\begin{remark}
  Many variations on Theorem \ref{theorem:cq2018exw2} may be similarly derived from Theorem \ref{theorem:cq2y88479a}.  For instance, one can
  \begin{itemize}
  \item assume stronger decay estimates in \eqref{eq:cq20154jaz}, such as those that would follow from \eqref{eq:cq21je2q0c}, which would yield corresponding strengthenings of \eqref{eq:cq20155ph0},
  \item let the cardinality of $S_{\pi_j}$ grow, by strengthening \eqref{eq:cq20154jaz} slightly, or
  \item replace $\lvert \varphi_j \rvert^2$ with more general products $\varphi_j ' \overline{\varphi_j}$, like in the ``Wigner distribution'' approach to defining microlocal lifts (see \S\ref{sec:cq72y5it3a}) or the study of ``dissipations of correlations'' \cite{2021arXiv2112.01427, MR4855321, 2024arXiv2405.05249}.
  \end{itemize}
\end{remark}

\subsection{Effective quantum unique ergodicity for Hecke--Maass forms, with smooth observables}\label{sec:cq72y5it3a}
We elaborate here on Example \ref{example:cq2010s6bn}.  Let $\varphi$ be a Hecke--Maass cusp form on $M := \mathrm{SL}_2(\mathbb{Z}) \backslash \mathbb{H}$, of eigenvalue $\lambda$.  We normalize $\varphi$ to have $L^2$-norm one, so that its squared magnitude defines a probability measure $\mu_\varphi$ on $M$, given for $\Psi \in C_c(M)$ by $\mu_\varphi(\Psi) := \langle \varphi \Psi, \varphi \rangle$ in $L^2(M)$.  This describes the distribution of $\varphi$ on the ``configuration space'' $M$.

The finer ``phase space'' portrait of $\varphi$ is controlled by the \emph{Wigner distributions} (or \emph{microlocal lifts}, …) $\omega_\varphi$ on $X := \mathrm{SL}_2(\mathbb{Z}) \backslash \mathrm{SL}_2(\mathbb{R})$, defined as follows (see e.g.\ \cite[\S6.5]{nelson-variance-3}, \cite[\S2--3]{MR1859345}, \cite[\S1D]{MR4879363}).  Let $K := \SO(2)$.  Write $\lambda = s(1 - s)$ with $s \in \mathbb{C}$.  For $k \in \mathbb{Z}$, define the Maass forms $\varphi_k$ of weight $k$ recursively by requiring that
\begin{equation*}
  \varphi_0 := \varphi,
  \quad
  i X \varphi_k =(s + k) \varphi_{k + 1},
  \quad
  i Y \varphi_k =(s - k) \varphi_{k - 1},
\end{equation*}
where $X := \tfrac{1}{2 i} \left(
  \begin{smallmatrix}
    1&i\\
    i&-1 \\
  \end{smallmatrix}
\right)$ and  $Y := \tfrac{1}{2 i} \left(
  \begin{smallmatrix}
    1&-i\\
    -i&-1 \\
  \end{smallmatrix}
\right)$ are raising and lowering operators.  Then $\varphi_k$ is an eigenvector for $K$; we denote by $\chi_k : K \rightarrow \mathbb{C}^\times$ its eigenvalue.  We recall that a function $\Psi : X \rightarrow \mathbb{C}$ is \emph{$K$-finite} if its right translates under $K$ have finite-dimensional span, or equivalently, if $\Psi$ is a finite linear combination of eigenvectors for $K$.  The functional
\begin{equation*}
  \omega_\varphi : \{\text{$K$-finite } \Psi \in C_c^\infty(X) \}
  \rightarrow \mathbb{C} 
\end{equation*}
\begin{equation*}
  \omega_\varphi(\Psi) := \sum_{k \in \mathbb{Z}} \langle \varphi_0 \Psi, \varphi_k \rangle
\end{equation*}
extends continuously to $C_c^\infty(X)$ and has the following properties:
\begin{itemize}
\item (Lifting) For each $\Psi \in C_c^\infty(M)$ (pulled back via $X \rightarrow M$, $g \mapsto g i$), we have $\omega_\varphi(\Psi) = \mu_\varphi(\Psi)$.
\item (Geodesic flow invariance) For each diagonal element $a \in \mathrm{SL}_2(\mathbb{R})$ and $\Psi \in C_c^\infty(X)$, we have $\omega_\varphi(\Psi(\bullet a)) - \omega_\varphi(\Psi) \rightarrow 0$ as $\lambda \rightarrow \infty$.
\end{itemize}

Let $\mu$ denote the invariant probability measure on $X$.  Theorem \ref{theorem:cq2y88479a} and a standard spectral argument (sketched below) yield the following:
\begin{theorem}[Subconvexity implies effective AQUE in phase space for smooth observables]\label{theorem:cq73e7xf3s}
  Assume subconvexity in the form \eqref{eq:cq21je2q0c}.  There exists $\delta > 0$ so that for each $\Psi \in C_c^\infty(X)$, there exists $B \geq 0$ so that for all $\varphi$ as above, we have
  \begin{equation}\label{eq:cq72zyf80f}
    \left\lvert \omega_\varphi(\Psi) - \mu(\Psi) \right\rvert \leq B \lambda^{- \delta}.
  \end{equation}
\end{theorem}
Theorem \ref{theorem:cq73e7xf3s} had previously been known in many cases.  The simplest case is when $\Psi$ is $K$-invariant, so that it factors through the ``configuration space'' $M$; then $\omega_\varphi(\Psi) = \mu_\varphi(\Psi)$, and Theorem \ref{theorem:cq73e7xf3s} follows from Watson's thesis.  If $\Psi$ is $K$-finite, then Theorem \ref{theorem:cq73e7xf3s} is given by \cite[Theorem 1.1]{MR4879363}.  The proof here mildly refines that in \emph{op.\ cit.}, so we will be brief.
\begin{proof}
  By Fourier analysis on $K$, we may decompose $\Psi$ as an infinite sum $\sum_{k \in \mathbb{Z}} \Psi_k$ of weight vectors, where each fixed Sobolev norm of $\Psi_k$ decays faster than any polynomial in $k$ (but no faster, in general).  Using the spectral decomposition for $L^2(X)$, we may further decompose each $\Psi_k$ as a sum/integral of Hecke--Maass forms of weight $k$, where the coefficients decay faster than any polynomial in $k$ and the Casimir eigenvalue, and the number (or volume) of terms up to a given eigenvalue is likewise polynomial.  Using this expansion, we reduce to establishing \eqref{eq:cq72zyf80f} when $\Psi$ is a non-constant spectrally normalized Hecke--Maass form (cuspidal or unitary Eisenstein) of weight $k$, provided that we can bound the quantity $B$ polynomially in the Casimir eigenvalue $\lambda_\Psi$ and weight $k$ of $\Psi$:
  \begin{equation}\label{eq:cq73fwimu8}
    B \ll(1 + \lvert \lambda_\Psi \rvert + \lvert k \rvert)^{\O(1)}.
  \end{equation}
  Since $\Psi$ has weight $k$, the Wigner distribution simplifies to $\omega_\varphi(\Psi) = \langle \varphi_0 \Psi, \varphi_k \rangle$.  By the triple product formula and our subconvexity hypothesis, we have
  \begin{equation*}
    \left\lvert \langle \varphi_0 \Psi, \varphi_k \rangle \right\rvert^2 \ll c \lambda^{1/2- \delta},
  \end{equation*}
  where $\delta > 0$ is fixed and $c$ is the local factor at $\infty$ in the triple product formula, given by an integral of matrix coefficients like on the left hand side of \eqref{eq:cq2zpduv15}.  Our task \eqref{eq:cq73fwimu8} reduces to showing that
  \begin{equation}\label{eq:cq72z04ml5}
    c \ll \lambda^{-1/2} (1 + \lvert \lambda_\Psi \rvert + \lvert k \rvert)^{\O(1)}.
  \end{equation}
  By construction, $\varphi_0$ and $\Psi$ are unit vectors, while a standard calculation with raising/lowering operators (e.g., \cite[\S7.2.1]{nelson-variance-3}) shows that $\lVert \varphi_q \rVert = 1$.  The required estimate follows from Theorem \ref{theorem:cq2y88479a}, which, as noted in Example \ref{example:cq2010s6bn}, gives
  \begin{equation*}
    c \ll \lambda^{-1/2}.
  \end{equation*}
\end{proof}
\begin{remark}
  For the proof given in \cite{MR4879363} of the $K$-finite case of Theorem \ref{theorem:cq73e7xf3s}, one may reduce to the case that $\Psi$ has some fixed weight $k$.  One must then show, in place of \eqref{eq:cq72z04ml5}, that
  \begin{equation}\label{eq:cq72zr0tte}
    c \ll_k \lambda^{-1/2} (1 + \lvert \lambda_\Psi \rvert)^{\O(1)}.
  \end{equation}
  This is obtained (with sharper dependence upon $\lambda_\Psi$) in \cite{MR4879363} (see, e.g., the estimates \eqref{eq:cq72zrtyqy} and \eqref{eq:cq201zuspg} quoted above).  The proof begins by evaluating $c$ in terms of generalized hypergeometric functions, such as ${}_4 F_3$, which may be expressed in terms of oscillating sums of ratios of $\Gamma$-functions (see \cite[Appendix A]{MR4879363}).  Bounding those sums using the triangle inequality and optimally estimating the magnitude of each term using Stirling's formula gives \eqref{eq:cq72zr0tte}.  The resulting bound depends exponentially upon $k$ \cite[Remarks 10.1 and A.8]{MR4879363}, and thus falls short of \eqref{eq:cq72z04ml5}.  (A similar exponential dependence appears in the earlier work of Jakobson \cite[p.~969]{MR1465794}.) To improve that dependence via the method of \emph{op.\ cit.}\ would require detecting cancellation in the sums defining the relevant hypergeometric functions.  Such cancellation is expected (indeed, it follows indirectly from Theorem \ref{theorem:cq2y88479a}), but seems difficult to detect directly while retaining uniformity in other parameters, such as $\lambda$.
\end{remark}

\subsection{Reduction of the proof}\label{sec:cq72y60xgx}

We turn to the proof of our main local result, Theorem \ref{theorem:cq2y88479a}.  By the Cauchy--Schwarz inequality and the polarization identity, it is enough to verify the following:
\begin{proposition}\label{proposition:cq21zxwnpk}
  Let $\vartheta_1, \vartheta_3 \in [0,1/2)$ with $\Theta := \frac{2 \vartheta_1}{1 - 2 \vartheta_3} < 1/2$, and let $\eps > 0$.  Let $F$ be a local field.  Let $\pi$ and $\sigma$ be generic irreducible unitary representations of $\GL_2(F)$.  Assume that $\pi$ is $\vartheta_1$-tempered and $\sigma$ is $\vartheta_3$-tempered.  Let $v \in \pi$ and $u_1, u_2 \in \sigma$ be unit vectors.  Set $d_j := \dim K u_j$.  Then
  \begin{equation*}
    \int_{\PGL_2(F)}
    \left\lvert \langle g v, v \rangle \right\rvert^2 \left\lvert \langle g u_1, u_2 \rangle \right\rvert \, d g
    \ll_{\Theta, \eps} \frac{(d_1 d_2)^{\Theta + \eps}}{C(\pi \times \tilde{\pi})^{1/2}}.
  \end{equation*}
\end{proposition}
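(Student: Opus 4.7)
The overall strategy is a deformation of the formal-degree identity recalled in Remark \ref{remark:cq21z2qqie}: when $\pi$ is tempered and the auxiliary factor $|\langle g u_1, u_2 \rangle|$ is absent, $\int_{\PGL_2(F)} |\langle g v, v\rangle|^2 \, dg = d(\pi)^{-1} \asymp C(\pi \times \tilde \pi)^{-1/2}$. The task here is to preserve this order of magnitude while accommodating both the non-temperedness of $\pi$ (parameter $\vartheta_1$) and the extra $\sigma$-factor.

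My plan proceeds in two main steps. First, for a parameter $\eta \in (\vartheta_3, 1/2)$ to be chosen, I would dominate $|\langle g u_1, u_2 \rangle|$ pointwise by a nonnegative $K$-bi-invariant majorant. The Cowling--Haagerup--Howe matrix coefficient bounds \cite{MR946351} applied to the $\vartheta_3$-tempered representation $\sigma$, combined with a positivity argument realizing the majorant as the normalized spherical matrix coefficient $\phi_\eta(g) = \langle g w_0, w_0 \rangle_{\tau_\eta}$ of the unitary complementary-series representation $\tau_\eta$ of $\PGL_2(F)$, should yield
\[
  |\langle g u_1, u_2 \rangle| \ll_\eta (d_1 d_2)^{\alpha(\eta)} \, \phi_\eta(g)
\]
for an explicit exponent $\alpha(\eta)$ that shrinks as $\eta$ approaches $\vartheta_3$. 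Second, by the linearization identity \cite[Lemma 3.4.2]{michel-2009}, the remaining integral $\int_{\PGL_2(F)} |\langle g v, v \rangle|^2 \phi_\eta(g) \, dg$ rewrites as a local Rankin--Selberg integral attached to the triple $\pi \times \tilde \pi \times \tau_\eta$; the convexity principle for local Rankin--Selberg integrals -- the local-field extension of \cite[\S 2.4]{PDN-AP-AS-que} -- then bounds this by a constant multiple of $C(\pi \times \tilde \pi)^{-1/2}$, with an implicit constant depending on $\tau_\eta$ (hence on $\eta$ alone). Choosing $\eta$ as close to $\vartheta_3$ as the integrability of $|\langle g v, v \rangle|^2 \phi_\eta$ against Haar measure permits -- the relevant constraint being forced by $\pi$ being only $\vartheta_1$-tempered -- then forces $\alpha(\eta) \leq \Theta + \eps$.

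The main obstacle I expect lies in Step 1. A direct invocation of \cite{MR946351} gives exponent $1/2$ on $(d_1 d_2)$, which already exceeds $\Theta$; the sharper exponent $\Theta + \eps$ requires the complementary-series interpolation described above, in which a pointwise-larger majorant $\phi_\eta$ with $\eta > \vartheta_3$ is used to reduce the $(d_1 d_2)$-exponent, at the cost of degrading integrability of the resulting majorant against $|\langle g v, v \rangle|^2$ in Step 2. The upper bound on $\eta$ forced by this integrability constraint, together with the lower bound $\eta > \vartheta_3$, is precisely what produces the exponent $\Theta = 2 \vartheta_1 / (1 - 2 \vartheta_3)$. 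Executing this balance uniformly over archimedean and non-archimedean $F$, and tracking the $\eta$-dependence of the convexity constant in Step 2 so that it stays absorbable into the implicit constant, is the technical heart of the argument.
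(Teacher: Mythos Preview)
Your proposal is correct and follows essentially the same route as the paper: bound $|\langle g u_1, u_2\rangle|$ by a complementary-series spherical function via matrix coefficient estimates interpolated against the trivial bound (Lemma~\ref{lemma:cq200vj1fw}), then linearize via Michel--Venkatesh (Lemma~\ref{lemma:cq200vnl3f}) and apply the convexity bound for the resulting local Rankin--Selberg integrals (\S\ref{sec:cq2006fllf}). One minor correction to your description of the optimization: with the standard parametrization, the $(d_1 d_2)$-exponent $\alpha(\eta) = \frac{1/2 - \eta}{1 - 2\vartheta_3}$ \emph{decreases} as $\eta$ moves \emph{away} from $\vartheta_3$ toward $1/2$, so one takes $\eta$ as \emph{large} as the integrability constraint allows, namely $\eta = 1/2 - 2\vartheta_1 - \eps$, which is what produces the exponent $\Theta$.
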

The proof of Proposition \ref{proposition:cq21zxwnpk} is organized as follows.  In \S\ref{sec:cq20059ket}, we apply standard estimates for $\langle g u_1, u_2 \rangle$ to reduce to the case that $\sigma$ belongs to the unramified complementary series and $u_1 = u_2 =: u$ is $K$-invariant.  In that case, the matrix coefficient $\langle g u, u \rangle$ is nonnegative, so we may drop the absolute values.  In \S\ref{sec:cq2006b97i}, we apply an identity of Michel--Venkatesh to relate such matrix coefficient integrals to products of local Rankin--Selberg integrals.  In \S\ref{sec:cq2006fllf}, we estimate the latter integrals via the Phragmén--Lindelöf convexity principle, with the analytic conductor entering via the Stirling bound for the local $\gamma$-factor.  The proof is then complete.

\section{Preliminaries}

\subsection{Notation}\label{sec:cq72yg9xkn}
From now on, let $F$ be a local field.  We denote by $\lvert . \rvert : F \rightarrow \mathbb{R}_{\geq 0}$ the normalized absolute value.  When $F$ is non-archimedean, we denote by $\mathfrak{o}$ the ring of integers, $\mathfrak{p}$ its maximal ideal, and $q$ the size of $\mathfrak{o} / \mathfrak{p}$.  For $x \in F$ and $y \in F^\times$, we set
\begin{equation*}
  n(x) :=
  \begin{pmatrix}
    1    & x \\
    0 & 1 \\
  \end{pmatrix}, \quad
  a(y) :=
  \begin{pmatrix}
    y    & 0 \\
    0 & 1 \\
  \end{pmatrix},
  \quad 
  w :=
  \begin{pmatrix}
    0    & 1 \\
    1 & 0 \\
  \end{pmatrix}.
\end{equation*}
These will be understood as elements of $\GL_2(F)$ or $\PGL_2(F)$, according to context.  We let $K$ denote the standard maximal subgroup of $\GL_2(F)$ or $\PGL_2(F)$.  We write $\zeta_F(s)$ for the local zeta function, given by $\pi^{-s/2} \Gamma(s/2)$ or $2 (2 \pi)^{-s} \Gamma(s)$ or $(1 - q^{-s})^{-1}$ according as $F$ is real or complex or non-archimedean.  We retain the asymptotic notation recorded in \S\ref{sec:cq72y62p45}.

\subsection{Additive characters}\label{sec:cq203c622o}
Let $\psi$ be a nontrivial unitary character of $F$.  We assume that either $F$ is archimedean and $\psi(x) = e^{\pm 2 \pi i \trace_{F / \mathbb{R}}(x)}$, or $F$ is non-archimedean and $\psi$ is unramified, i.e., trivial on $\mathfrak{o}$ but not on $\mathfrak{p}^{-1}$.

\subsection{Measures}\label{sec:cq2028ta9m}
We equip $F$ with the $\psi$-self-dual Haar measure and $F^\times$ with the Haar measure $d^\times y = \zeta_F(1) \lvert y \rvert^{-1} \, d y$; in the non-archimedean case, each of these measures assigns volume one to maximal compact subgroups.  We transfer the measure on $F$ to
\begin{equation*}
  N := \left\{ n(x) : x \in F \right\},
\end{equation*}
which we understand as a subgroup of $\GL_2(F)$ or $\PGL_2(F)$ according to context.  We equip $K$ with the probability Haar measure $d k$ and $\PGL_2(F)$ with the Haar measure $d g = \lvert y \rvert^{-1} \, d x \, d^\times y \, d k$ given in terms of the Iwasawa decomposition $g = n(x) a(y) k$, $k \in K$ (i.e., $d g$ is the pushforward of the indicated measure under the map $(x, y, k) \mapsto n(x) a(y) k$ from $F \times F^\times \times K$ to $\PGL_2(F)$).  We equip $N \backslash \PGL_2(F)$ with the quotient measure, given explicitly by $d g = \lvert y \rvert^{-1} \,d^\times y \, d k$ for $g = a(y) k$.

\subsection{Spherical functions}
For each $s \in \mathbb{C}$, we let $\mathcal{I}(s)$ denote the corresponding induced representation of $\PGL_2(F)$, consisting of all smooth functions $f : \PGL_2(F) \rightarrow \mathbb{C}$ satisfying
\begin{equation*}
  f(n(x) a(y) g) = \lvert y \rvert^{s} f(g),
\end{equation*}
with $\PGL_2(F)$ acting via right translation: $g f(h) := f(h g)$.  We let
\begin{equation*}
  (,) : \mathcal{I}(s) \otimes \mathcal{I}(1-s) \rightarrow \mathbb{C}
\end{equation*}
denote the invariant pairing given by integration over $K$.  We let $f_s \in \mathcal{I}(s)$ denote the unique element satisfying
\begin{equation*}
  f_s \text{ is $K$-invariant,} \qquad f_s(1) = 1.
\end{equation*}
We let $\Xi_s : \PGL_2(F) \rightarrow \mathbb{C}$ denote the unique bi-$K$-invariant matrix coefficient of $\mathcal{I}(s)$ with $\Xi_s(1) = 1$, given explicitly by
\begin{equation*}
  \Xi_s(g) := (g f_s, f_{- s})
  = \int_{K} f_s(k g) f_{1-s}(k) \, d k = \int_{K} H(k g)^{s} \, d k,
\end{equation*}
where we write $H(g) := \lvert y \rvert$ using the Iwasawa decomposition $g \in n(x) a(y) K$ (in which $\lvert y \rvert$ is uniquely determined by $g$).  In particular, if $s \in \mathbb{R}$, then the integrand is positive, so $\Xi_s(g) > 0$ for all $g$.

\subsection{Temperedness}\label{sec:cq2zo3v0dy}
Let $\pi$ be a unitary representation of $\GL_2(F)$.  We recall that $\pi$ is \emph{tempered} if its matrix coefficients lie in $L^{2+\eps}$ modulo the center (see \cite{MR946351}).  For $\vartheta \in [0,1/2)$, we say that $\pi$ is \emph{$\vartheta$-tempered} if it does not weakly contain a complementary series representation of parameter strictly larger than $\vartheta$.  For example, when $\pi$ is irreducible, it is either
\begin{itemize}
\item square-integrable modulo the center, in which case it is tempered, or
\item an irreducible principal series representation, say $\sigma_1 \lvert . \rvert^{s_1} \boxplus \sigma_2 \lvert . \rvert^{s_2}$ with each $\sigma_i$ unitary, in which case it is $\vartheta$-tempered precisely when each $\lvert \Re(s_i) \rvert \leq \vartheta$.
\end{itemize}
In particular, ``$0$-tempered'' is equivalent to ``tempered''.  Throughout the paper, $\vartheta$ denotes an element of $[0, 1/2)$.

\subsection{Whittaker models}

Let $\pi$ be a generic irreducible unitary representation of $\GL_2(F)$.  Thus, $\pi$ admits a Whittaker model $\mathcal{W}(\pi, \psi)$, consisting of functions $W : \GL_2(F) \rightarrow \mathbb{C}$ satisfying $W(n(x) g) = \psi(x) W(g)$.  We also work with $\mathcal{W}(\pi, \psi^{-1})$, defined analogously.  We may normalize the inner product on $\pi$ to be given by integration over the diagonal, so that (see \cite[\S6.4]{MR748505} and \cite[\S10.2]{MR1999922})
\begin{equation*}
  \lVert W \rVert^2 = \int_{F^\times} \lvert W(a(y)) \rvert^2 \,d^\times y.
\end{equation*}
If $\pi$ is $\vartheta$-tempered, then for $y \in F^\times$ and $k \in K$, we have (see \cite[\S3.2.3]{michel-2009})
\begin{equation*}
  W(a(y) k) \ll_{W, \eps,M} \min \left( \lvert y \rvert^{1/2 - \vartheta - \eps}, \lvert y \rvert^{- M} \right)
\end{equation*}
for each $\eps > 0$ and $M < \infty$.

\section{Reduction to bounds for Rankin--Selberg integrals}

The purpose of this section is to reduce the proof of Theorem~\ref{theorem:cq2y88479a} to that of the bound \eqref{eq:cq2003eb3z} for local Rankin--Selberg integrals.

\subsection{Matrix coefficient bounds}\label{sec:cq20059ket}
\begin{lemma}\label{lemma:cq200vj1fw}
  Let $\vartheta < \alpha < 1/2$ and $\eps > 0$.  Let $\sigma$ be a $\vartheta$-tempered unitary representation of $\GL_2(F)$.  Let $u_1, u_2 \in \sigma$ be $K$-finite unit vectors.  Write $d_j = \dim K u_j$.  Then for each $g \in \GL_2(F)$,
  \begin{equation*}
    \left\langle g u_1, u_2 \right\rangle \ll_{\vartheta, \alpha, \eps}
    (d_1 d_2)^{\frac{1/2 - \alpha}{1 - 2 \vartheta} + \eps}
    \Xi_{1/2 + \alpha}(g).
  \end{equation*}
\end{lemma}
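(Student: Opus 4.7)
The plan is to combine a Cowling--Haagerup--Howe pointwise bound on $K$-finite matrix coefficients with the trivial estimate $\lvert \langle g u_1, u_2 \rangle \rvert \leq \lVert u_1 \rVert \lVert u_2 \rVert = 1$, to interpolate via the elementary inequality $\min(1, x) \leq x^t$ (valid for $x \geq 0$ and $t \in [0, 1]$), and to finish via an asymptotic comparison of spherical functions.

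First, I would record the extension of the CHH bound \cite{MR946351} to the $\vartheta$-tempered setting, namely
\[
\lvert \langle g u_1, u_2 \rangle \rvert \leq \sqrt{d_1 d_2}\, \Xi_{1/2 + \vartheta}(g).
\]
For irreducible $\sigma$ this follows by the usual projection onto $K$-isotypic components together with the fact that an irreducible $\vartheta$-tempered representation of $\GL_2(F)$ is either tempered or a complementary series $\mathcal{I}(1/2 + \vartheta')$ with $0 \leq \vartheta' \leq \vartheta$, whose spherical matrix coefficient is exactly $\Xi_{1/2 + \vartheta'}$; the general case follows by direct integral decomposition together with the pointwise domination $\Xi_{1/2 + \vartheta'} \leq \Xi_{1/2 + \vartheta}$ for $0 \leq \vartheta' \leq \vartheta$, which itself is a consequence of the H\"older log-convexity $\Xi_{(1-s)a + s b} \leq \Xi_{a}^{1-s} \Xi_{b}^{s}$ combined with the symmetry $\Xi_s = \Xi_{1-s}$.

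Combining the CHH bound and the trivial bound via $\min(1, x) \leq x^t$ yields
\[
\lvert \langle g u_1, u_2 \rangle \rvert \leq (d_1 d_2)^{t/2}\, \Xi_{1/2 + \vartheta}(g)^t \quad \text{for any } t \in [0, 1].
\]
I would then set $t := 2 \bigl( \tfrac{1/2 - \alpha}{1 - 2\vartheta} + \eps \bigr)$, which lies in $(0, 1)$ after shrinking $\eps$ if necessary; this makes the factor $(d_1 d_2)^{t/2}$ match the claimed exponent exactly.

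The final and most delicate step is the pointwise comparison
\[
\Xi_{1/2 + \vartheta}(g)^t \leq C_{\vartheta, \alpha, \eps}\, \Xi_{1/2 + \alpha}(g).
\]
Using bi-$K$-invariance and $\Xi_s = \Xi_{1-s}$, I may assume $g = a(y)$ with $H(g) = \lvert y \rvert \geq 1$, and then invoke the classical uniform asymptotic $\Xi_s(a(y)) \asymp_s \lvert y \rvert^{\lvert s - 1/2 \rvert - 1/2}$ (with an additional logarithmic factor only at $s = 1/2$). A short computation gives $t(\vartheta - 1/2) = \alpha - 1/2 - \eps(1 - 2\vartheta)$, so the left-hand side is $\asymp \lvert y \rvert^{\alpha - 1/2 - \eps(1 - 2\vartheta)}$ while the right-hand side is $\asymp \lvert y \rvert^{\alpha - 1/2}$; the negative margin $-\eps(1 - 2\vartheta)$ absorbs both the possible logarithmic factor at $\vartheta = 0$ and any implied constants. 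I expect this spherical function comparison, in particular its uniformity across all local fields $F$ (archimedean and non-archimedean alike), to be the only step requiring real care.
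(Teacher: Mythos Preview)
Your proposal is correct and follows essentially the same route as the paper's proof: a Cowling--Haagerup--Howe type bound (the paper cites \cite[Lemma 9.1]{venkatesh-2005}, which gives $(d_1 d_2)^{1/2}(1+\lvert y\rvert)^{-1/2+\vartheta+\eps}$ after Cartan reduction), interpolation against the trivial bound $\lvert\langle g u_1,u_2\rangle\rvert\leq 1$, and the standard asymptotic $\Xi_{1/2+\alpha}(a(y))\gg_\eps (1+\lvert y\rvert)^{-1/2+\alpha-\eps}$. The only cosmetic difference is that the paper passes to the Cartan coordinate $a(y)$ at the outset and works with explicit powers of $1+\lvert y\rvert$, whereas you keep the spherical function $\Xi_{1/2+\vartheta}$ until the final comparison; your sharper-looking CHH input (without the $\eps$) is not actually needed, since the $\eps$-margin in your final step absorbs the $\eps$ present in Venkatesh's formulation just as well.
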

\begin{proof}
  By the Cartan decomposition, it suffices to consider the case $g = a(y)$, with $\lvert y \rvert \geq 1$.  In that case, \cite[Lemma 9.1]{venkatesh-2005} gives
  \begin{equation}\label{eq:cq21zymbdc}
    \left\langle a(y) u_1, u_2 \right\rangle \ll_\eps
    (d_1 d_2)^{1/2}
    \left( 1 + \lvert y \rvert \right)^{- 1/2 + \vartheta + \eps}.
  \end{equation}
  Interpolating between this and the trivial bound $\left\lvert \left\langle a(y) u_1, u_2 \right\rangle \right\rvert \leq 1$ yields, for $\eps$ small enough,
  \begin{align*}
    \left\langle a(y) u_1, u_2 \right\rangle
    &\ll_\eps
    \left( (d_1 d_2)^{1/2}
      \left( 1 + \lvert y \rvert \right)^{- 1/2 + \vartheta + \eps} \right)^{\frac{1/2 - \alpha + \eps}{1/2 - \vartheta - \eps}}
    \\
    &=
      (d_1 d_2)^{\frac{1/2 - \alpha}{1 - 2 \vartheta} + \eps '}(1 + \lvert y \rvert)^{- 1/2 + \alpha - \varepsilon},
  \end{align*}
  where $\eps ' \rightarrow 0$ as $\eps \rightarrow 0$.  As noted in \cite[(9.2)]{venkatesh-2005}, we have
  \begin{equation*}
    (1 + \lvert y \rvert)^{- 1/2 + \alpha - \eps} \ll_{\eps} 
    \Xi_{1/2 + \alpha}(a(y)).
  \end{equation*}
  Combining these last two estimates and taking $\eps$ small enough yields the required bound.
\end{proof}

For the proof of Proposition \ref{proposition:cq21zxwnpk}, we apply Lemma \ref{lemma:cq200vj1fw} with $\alpha := 1/2 - 2 \vartheta_1 - \eps$ and $\eps > 0$ sufficiently small.  Using the nonnegativity of $\Xi_s$ for $s$ real, we reduce to establishing the following:
\begin{proposition}\label{proposition:cq21zy73m0}
  Let $\pi$ be a $\vartheta$-tempered irreducible unitary representation of $\GL_2(F)$, let $v \in \pi$ be a unit vector, and let $s \in[2 \vartheta + \eps, 1 - 2 \vartheta - \eps]$.  Then
  \begin{equation}\label{eq:cq2002i2g9}
    \int_{\PGL_2(F)}
    \left\lvert \langle g v, v \rangle \right\rvert^2
    \Xi_{s}(g) \, d g
    \ll_{\eps} C(\pi \times \tilde{\pi})^{- 1/2}.
  \end{equation}
\end{proposition}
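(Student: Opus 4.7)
The plan follows the two-step roadmap announced at the end of \S\ref{sec:cq202b81rv}.

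\emph{Linearization.} I would begin by writing $|\langle gv, v\rangle|^2 = \langle gv, v\rangle \overline{\langle gv, v\rangle}$, recognizing the second factor as a matrix coefficient of $\tilde\pi$; and $\Xi_s(g) = (g f_s, f_{1-s})$ as a spherical matrix coefficient of $\mathcal{I}(s)$. Thus the left-hand side of \eqref{eq:cq2002i2g9} is a triple matrix coefficient integral over $\PGL_2(F)$ for the triple $(\pi, \tilde\pi, \mathcal{I}(s))$, whose central characters multiply trivially and for which the hypothesis $s \in [2\vartheta + \eps, 1 - 2\vartheta - \eps]$ guarantees absolute convergence. I would then invoke the Michel--Venkatesh linearization identity \cite[Lemma 3.4.2]{michel-2009}, specialized to this constellation, to express the integral as a product of two local Rankin--Selberg zeta integrals. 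Concretely, one anticipates an identity of the shape
\begin{equation*}
  \int_{\PGL_2(F)} |\langle gv, v\rangle|^2 \Xi_s(g)\, dg = \kappa_F(s) \cdot \Psi(s, W, \widetilde W) \cdot \Psi(1-s, W, \widetilde W),
\end{equation*}
where $W \in \mathcal{W}(\pi, \psi)$ and $\widetilde W \in \mathcal{W}(\tilde\pi, \psi^{-1})$ are the Whittaker realizations of $v$ and its dual, $\Psi(s, W, \widetilde W) = \int_{F^\times} W(a(y)) \widetilde W(a(y)) |y|^{s-1} \, d^\times y$ is the standard local Rankin--Selberg integral, and $\kappa_F(s)$ is an explicit normalizing factor built from local zeta values. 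The symmetric evaluations at $s$ and $1-s$ reflect the pairing of $f_s$ against $f_{1-s}$ in $\Xi_s$.

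\emph{Convexity bound for Rankin--Selberg integrals.} Writing $\Psi(s, W, \widetilde W) = L(s, \pi \times \tilde\pi) \cdot \Psi^\ast(s)$ with $\Psi^\ast$ entire, I would estimate $\Psi^\ast$ uniformly in the strip $2\vartheta + \eps \leq \Re(s) \leq 1 - 2\vartheta - \eps$ via the Phragmen--Lindelöf principle. The boundary input is: $\Psi^\ast(s)$ is bounded on a line $\Re(s) = \sigma_0$ deep inside the region of absolute convergence (using the Whittaker decay recalled in \S\ref{sec:cq2zo3v0dy}), and controlled on $\Re(s) = 1 - \sigma_0$ via the local functional equation. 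The $\gamma$-factor appearing there has magnitude $\asymp C(\pi \times \tilde\pi)^{1/2 - \Re(s)}$ up to uniform factors, by Stirling's formula in the archimedean case or the explicit local-factor shape in the non-archimedean case. Interpolating gives a polynomial conductor bound on $\Psi^\ast(s)$; multiplying the contributions at $s$ and $1-s$ symmetrizes the exponent and yields the target $C(\pi \times \tilde\pi)^{-1/2}$, matching the formal-degree prediction of Remark~\ref{remark:cq21z2qqie}.

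\emph{Main obstacle.} The conceptually substantive step is the linearization: one must verify the precise product structure and obtain the correct, uniform normalizing constant $\kappa_F(s)$, including in the complementary-series regime for $\mathcal{I}(s)$, and ensure the various integrals are absolutely convergent so that the formal manipulation is legitimate. Once the algebraic identity is in place, the convexity step is a standard application of local Rankin--Selberg analysis; the exponent $1/2$ on the conductor is forced by the Stirling-type asymptotics of the $\gamma$-factor across a strip of width one, and the whole argument may be read as a deformation of the square-integrable formal-degree calculation mentioned in Remark~\ref{remark:cq21z2qqie}.
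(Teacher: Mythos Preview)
Your two-step strategy (linearize via Michel--Venkatesh, then interpolate) is exactly the paper's. Two corrections are needed.

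First, a minor one: the linearization identity, applied with the $K$-invariant section $f_s$, yields the integral $I(s,W) = \int_{N\backslash \PGL_2(F)} |W(g)|^2 f_s(g)\, dg$ over $N\backslash \PGL_2(F)$ (hence including a $K$-average), not the bare torus integral you wrote; the normalizing constant is an $s$-independent $c_F \asymp 1$.

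Second, and this is a genuine gap: your proposed boundary input on the right edge does not give a bound \emph{uniform} in $W$. The Whittaker decay estimate you cite carries an implied constant depending on $W$; using it at some $\sigma_0 > 1$ would allow $\int |W(a(y))|^2 |y|^{\sigma_0 -1}\,d^\times y$ to grow with the conductor (think of the archimedean case, where $W(a(y))$ lives at height $|y|$ comparable to the spectral parameter), and the interpolation would then produce the wrong exponent. The paper's key input is to take the right boundary at exactly $\Re(s)=1$, where
\[
|I(s,W)| \leq I(1,W) = \int_{K}\int_{F^\times} |W(a(y)k)|^2\, d^\times y\, dk = \int_K \lVert kW\rVert^2\, dk = \lVert W\rVert^2 = 1,
\]
a bound that is uniform precisely because $v$ is a unit vector. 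This is the single place the normalization hypothesis is used, and it replaces your appeal to Whittaker decay. The left edge $\Re(s)=0$ is then handled, as you say, by the functional equation and the Stirling bound for $\gamma(s,\pi\times\tilde\pi,\psi)$; to make the interpolation clean the paper multiplies $I^\ast(s,W) := \zeta_F(2s) I(s,W)$ by an explicit finite product $P(s)=\prod_{\mu}(1-q^{-(s-\mu)})$ that cancels the poles of $L(s,\pi\times\tilde\pi)$ in $\Re(s)\geq -1$, satisfies $P(s)\ll 1$ everywhere, and $P(s)\gg_\eps 1$ on the target interval --- this avoids having to track the size of $L(s,\pi\times\tilde\pi)$ itself along the boundary lines.
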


\subsection{Linearization of triple product periods}\label{sec:cq2006b97i}
Let $\pi$ be a $\vartheta$-tempered generic irreducible unitary representation of $\GL_2(F)$, and let $W \in \mathcal{W}(\pi, \psi)$.  The matrix coefficient integral
\begin{equation}\label{eq:cq2y8uea28}
  \int_{\PGL_2(F)}
  \left\lvert \langle g W, W \rangle \right\rvert^2
  \Xi_s(g)
  \, d g
\end{equation}
converges absolutely for $\Re(s) \in (2 \vartheta, 1 - 2 \vartheta)$, while the local Rankin--Selberg integral
\begin{equation}\label{eq:cq20mtt806}
  I(s,W) := \int_{N \backslash \PGL_2(F)} \lvert W(g) \rvert^2 f_s(g) \, d g
\end{equation}
converges absolutely for $\Re(s) > 2 \vartheta$.  Each integral defines a holomorphic function in the domain of definition.  The following result compares these functions:
\begin{lemma}\label{lemma:cq200vnl3f}
  For $\Re(s) \in (2 \vartheta, 1 - 2 \vartheta)$, we have
  \begin{equation*}
    \int_{\PGL_2(F)}
    \left\lvert \langle g W, W \rangle \right\rvert^2
    \Xi_s(g)
    \, d g
    =
    c_F I(s, W) I(1-s, W)
  \end{equation*}
  where $c_F \asymp 1$.
\end{lemma}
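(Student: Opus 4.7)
\emph{Proof proposal.} This is the linearization identity of Michel--Venkatesh \cite[Lemma 3.4.2]{michel-2009}, and the plan is to extend their argument from the non-archimedean setting to a general local field $F$. The key conceptual observation is that $\Xi_s(g) = (g f_s, f_{1-s})$ is itself a matrix coefficient, so that the left-hand side of the asserted identity is a local triple product integral for $\pi \otimes \tilde\pi \otimes \mathcal{I}(s)$. Such integrals unfold through the Whittaker model into products of two Rankin--Selberg integrals, one for $\pi \times \mathcal{I}(s)$ and one for $\tilde\pi \times \mathcal{I}(s)$; the second factor picks up $1-s$ in place of $s$ via the intertwining relation between $\mathcal{I}(s)$ and $\mathcal{I}(1-s)$.

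Concretely, first I would use the normalization $\lVert W\rVert^2 = \int_{F^\times} |W(a(y))|^2 \, d^\times y$ to write $\langle g W, W \rangle = \int_{F^\times} W(a(y) g) \overline{W(a(y))} \, d^\times y$. Squaring this, substituting $y_2 = t y_1$ in the resulting double integral (so as to exploit $W(a(ty_1) g) = W(a(t) a(y_1) g)$), and then integrating against $\Xi_s(g) \, dg$, I would apply the Haar-preserving substitution $g \mapsto a(y_1)^{-1} g$ (valid since $\PGL_2(F)$ is unimodular) to absorb $y_1$ into the $\PGL_2$-integration variable. Fubini in the strip $\Re(s) \in (2\vartheta, 1-2\vartheta)$ is justified using the Whittaker decay from \S\ref{sec:cq2zo3v0dy} and the majorization $|\Xi_s| \le \Xi_{\Re s}$. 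Next I would evaluate the remaining inner integral
\begin{equation*}
  \int_{\PGL_2(F)} W(g) \overline{W(a(t) g)} \, \Xi_s(a(y_1)^{-1} g) \, dg
\end{equation*}
via the Iwasawa decomposition $g = n(x) a(z) k$, using $\Xi_s(h) = \int_K H(k_0 h)^s \, dk_0$ and $W(n(x) a(z) k) = \psi(x) W(a(z) k)$. After the $x$-Fourier integration (against $\psi((1-t) x)$, following the substitution $x \to y_1 x$) and the $(z, k)$-integrations, the answer factors: the $(z, k)$-piece recovers $I(s, W)$, while the Mellin-type weight in $y_1$, paired with $\overline{W(a(y_1))} W(a(t y_1)) \, d^\times y_1$ from the outer integration, gives $I(1-s, W) = \int_K \int_{F^\times} |W(a(z) k)|^2 |z|^{-s} \, d^\times z \, dk$. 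The constant $c_F \asymp 1$ then records the Haar-normalization factors fixed in \S\ref{sec:cq2028ta9m}.

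The hard part will be the precise Mellin analysis of $\Xi_s(a(y_1)^{-1} g)$: its $y_1$-dependence does not separate pointwise from its $g$-dependence, and the separation only emerges after interchanging the $K$-average defining $\Xi_s$ with the Iwasawa $K$-component of $g$. Verifying that this interchange produces exactly the pairing $I(s, W) I(1-s, W)$ -- in particular, that the factor $1-s$ rather than $s$ arises in one slot, via the intertwining-operator symmetry between $\mathcal{I}(s)$ and $\mathcal{I}(1-s)$ -- is the core bookkeeping, and is what pins down the shape of the answer.
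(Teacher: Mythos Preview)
Your conceptual picture is right --- the identity is an unfolding of a triple product integral into Rankin--Selberg factors --- but the computation as you have ordered it does not close. After the substitution $g \mapsto a(y_1)^{-1} g$ and the Iwasawa decomposition $g = n(x) a(z) k$, the spherical factor becomes $\Xi_s(a(y_1)^{-1} n(x) a(z))$, which still depends on $x$ in a nontrivial way: $\Xi_s$ is bi-$K$-invariant but not left-$N$-invariant. Hence the ``$x$-Fourier integration against $\psi((1-t)x)$'' is not the clean step you describe; the integrand is $\psi((1-t)y_1 x)\,\Xi_s(n(x) a(y_1^{-1} z))$, and nothing forces $t=1$ or produces the weight $|y_1|^{-s}$ that would turn the outer $y_1$-integral of $\overline{W(a(y_1))} W(a(t y_1))$ into $I(1-s,W)$. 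There is also no $K$-integration in that outer integral, so it cannot match $I(1-s,W) = \int_K \int_{F^\times} |W(a(z)k)|^2 |z|^{-s}\,d^\times z\, dk$. The appeal to an ``intertwining-operator symmetry'' does not supply the missing mechanism.

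The fix is to reverse the order of operations: expand $\Xi_s(g) = \int_K f_s(k_0 g)\,dk_0$ \emph{first} and substitute $g \mapsto k_0^{-1} g$. This replaces $\Xi_s$ by $f_s$, which \emph{is} left-$N$-invariant. Writing $\langle n(x) a(y) k W, k_0 W\rangle = \int_{F^\times} \psi(z x)\, W(a(z y) k)\,\overline{W(a(z) k_0)}\, d^\times z$ and applying Plancherel in $x$ gives
\[
\int_F \left|\langle n(x) a(y) k W, k_0 W\rangle\right|^2\,dx
= \zeta_F(1) \int_{F^\times} |W(a(zy)k)|^2\,|W(a(z)k_0)|^2\,|z|^{-1}\,d^\times z,
\]
after which the change $y \mapsto z^{-1} y$ separates the variables and yields $c_F\, I(s,W)\, I(1-s,W)$ with $c_F = \zeta_F(1) \asymp 1$. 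No intertwining operator enters; the $1-s$ falls out of the Jacobian.

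The paper's own proof is shorter still and sidesteps the explicit unfolding: it invokes \cite[Lemma 3.4.2]{michel-2009} directly on the unitary line $\Re(s) = 1/2$, where $\mathcal{I}(s)$ is a genuine unitary principal series and the Michel--Venkatesh hypothesis applies verbatim, and then extends to the strip $\Re(s) \in (2\vartheta, 1-2\vartheta)$ by analytic continuation, using that both sides are holomorphic there. This avoids having to justify Fubini off the critical line.
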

\begin{proof}
  Like in \cite[Prop 2.11]{PDN-AP-AS-que}, this follows from \cite[Lemma 3.4.2]{michel-2009}, first for $\Re(s) = 1/2$, then in general via analytic continuation.  (See also \cite[Lemma 6]{2021arXiv210112106B}, \cite[\S2.14]{nelson-subconvex-reduction-eisenstein}, \cite[Prop 5.1]{MR4234973}, \cite[Prop 4.5]{MR4193477}.)
\end{proof}

Lemma \ref{lemma:cq200vnl3f} reduces the proof of Proposition \ref{proposition:cq21zy73m0} to that of the estimate
\begin{equation*}
  I(s, W)
  I(1-s, W)
  \ll_{\eps} C(\pi \times \tilde{\pi})^{- 1/2}
\end{equation*}
whenever $\pi$ is $\vartheta$-tempered and $s \in[2 \vartheta + \eps, 1 - 2 \vartheta - \eps]$.  We may reduce further to showing that
\begin{equation}\label{eq:cq2003eb3z}
  I(s, W) \ll_\eps C(\pi \times \tilde{\pi})^{(s-1)/2}
\end{equation}
This estimate is given below in Proposition \ref{lemma:cq2003epv6}.

\section{Rankin--Selberg integrals}\label{sec:cq20mmeihl}
In this section, we record some standard background concerning local Rankin--Selberg integrals.  General references for this section are \cite[\S3]{MR1990380} and \cite{Ja72}.

\subsection{Zeta integrals}
We equip $\GL_2(F)$ with the Haar measure compatible with the chosen measures on $\PGL_2(F) = \GL_2(F) / F^\times$ and $F^\times$.

Let $\pi_1$ and $\pi_2$ be generic irreducible unitary representations of $\GL_2(F)$, with Whittaker models $\mathcal{W}(\pi_1, \psi)$ and $\mathcal{W}(\pi_2, \psi^{-1})$.  Assume that $\pi_i$ is $\vartheta_i$-tempered.  Let $\mathcal{S}(F^2)$ denote the space of Schwartz--Bruhat functions $\Phi : F^2 \rightarrow \mathbb{C}$.  Given $W_1 \in \mathcal{W}(\pi_1, \psi)$, $W_2 \in \mathcal{W}(\pi_2, \psi^{-1})$, $\Phi \in \mathcal{S}(F^2)$ and $s \in \mathbb{C}$, the local zeta integral
\begin{equation*}
  \Psi(s, W_1, W_2, \Phi) := \int_{N \backslash \GL_2(F)}
  W_1(g) W_2(g)
  \Phi(e_2 g) \lvert \det g \rvert^s \, d g,
  \quad
  e_2 := (0,1)
\end{equation*}
converges absolutely for $\Re(s) > \vartheta_1 + \vartheta_2$.  It extends to a meromorphic function on the complex plane, bounded at infinity in vertical strips.

\subsection{$L$-factors}\label{sec:cq72ygpj5r}

The local $L$-factor is of the form $L(s, \pi_1 \times \pi_2) = \prod_j \zeta_F(s - \mu_j)$, where the number of $\mu_j$ is at most $4$ and each satisfies
\begin{equation}\label{eq:cq203df2y5}
  \Re(\mu_j) \leq \vartheta_1 + \vartheta_2.
\end{equation}
The ratio
\begin{equation*}
  \frac{\Psi(s, W_1, W_2, \Phi)}{L(s, \pi_1 \times \pi_2)}
\end{equation*}
is entire.

\subsection{$\gamma$-factors and functional equation}

Set $\tilde{W}(g) := W(w g^{- \transpose})$ and $\hat{\Phi}(x) := \int_{y \in F^2} \Phi(y) \psi(y^{\transpose} x) \, d y$.  The local $\gamma$-factor is the meromorphic function for which the following local functional equation holds:
\begin{equation*}
  \Psi(1 - s, \tilde{W}_1, \tilde{W}_2, \hat{\Phi})
  = \omega_{\pi_2}(- 1) \gamma(s, \pi_1 \times \pi_2, \psi)
  \Psi(s, W_1, W_2, \Phi),
\end{equation*}
where $\omega_{\pi_i}$ denotes the central character of $\pi_2$.  We have
\begin{equation*}
  \gamma(s, \pi_1 \times \pi_2, \psi)
  =
  \eps(s, \pi_1 \times \pi_2, \psi)
  \frac{L(1-s, \tilde{\pi}_1 \times \tilde{\pi}_2)}{L(s, \pi_1 \times \pi_2)},
\end{equation*}
where the local $\eps$-factor is an entire function whose reciprocal is also entire.

\subsection{Conductors and Stirling asymptotics}\label{sec:cq20musxtb}
When $F$ is non-archimedean, we have
\begin{equation}\label{eq:cq20mtnvj7}
  \eps(s, \pi_1 \times \pi_2, \psi)
  =
  C(\pi_1 \times \pi_2)^{1/2 - s}
  \eps(\tfrac{1}{2}, \pi_1 \times \pi_2, \psi),
  \quad
  \lvert \eps(\tfrac{1}{2}, \pi_1 \times \pi_2, \psi) \rvert = 1,
\end{equation}
where $C(\pi_1 \times \pi_2)$, which we refer to as the \emph{conductor} of $\pi_1 \times \pi_2$, has the form $C(\pi_1 \times \pi_2) = q^{c(\pi_1 \times \pi_2)}$ for some $c(\pi_1 \times \pi_2) \in \mathbb{Z}_{\geq 0}$, the \emph{conductor exponent}.

When $F$ is archimedean, we define the \emph{analytic conductor} $C(\pi_1 \times \pi_2)$ as in e.g.\ \cite[(2.14)]{2021arXiv210112106B}.  Using the Legendre duplication formula $\zeta_{\mathbb{C}}(s) = \zeta_{\mathbb{R}}(s) \zeta_{\mathbb{R}}(s + 1)$, we may write $L(s, \pi_1 \times \pi_2) = \prod_{\nu} \zeta_{\mathbb{R}}(s - \nu)$ for some multiset $\{\nu\}$; then $C(\pi_1 \times \pi_2) \asymp \prod_{\nu}(1 + \lvert \nu \rvert)$.

\begin{lemma}
  Let $s \in \mathbb{C}$, $\eps > 0$ and $M \geq 0$ such that
  \begin{itemize}
  \item $\lvert s \rvert \leq M$, and
  \item $s$ is at least $\eps$ away from any pole of $L(s, \pi_1 \times \pi_2)$.
  \end{itemize}
  Then
  \begin{equation}\label{eq:cq20mtobvl}
    \frac{1}{\gamma(s, \pi_1 \times \pi_2, \psi)}
    \ll_{\eps, M} C(\pi_1 \times \pi_2)^{s - 1/2}.
  \end{equation}
\end{lemma}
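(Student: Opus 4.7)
The plan is to use the factorization
\[
\frac{1}{\gamma(s, \pi_1 \times \pi_2, \psi)}
= \frac{1}{\epsilon(s, \pi_1 \times \pi_2, \psi)}
\cdot \frac{L(s, \pi_1 \times \pi_2)}{L(1-s, \tilde{\pi}_1 \times \tilde{\pi}_2)}
\]
from the local functional equation and to bound each factor separately, treating the non-archimedean and archimedean cases differently.

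In the non-archimedean case, formula \eqref{eq:cq20mtnvj7} immediately gives $\lvert 1/\epsilon(s, \pi_1 \times \pi_2, \psi) \rvert = C(\pi_1 \times \pi_2)^{\Re(s) - 1/2}$, reducing the task to bounding the remaining $L$-ratio by $O_{\eps, M}(1)$. Both $L$-factors are products of at most four terms of the form $(1 - q^{-\cdot + \mu_j})^{-1}$. The hypothesis that $s$ is $\eps$-away from any pole of $L(s, \pi_1 \times \pi_2)$ bounds the numerator uniformly on $\lvert s \rvert \leq M$, while $1/L(1-s, \tilde{\pi}_1 \times \tilde{\pi}_2)$ is a polynomial in $q^{-s}$ of bounded degree, hence trivially bounded on compact sets.

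In the archimedean case, $\epsilon(s, \pi_1 \times \pi_2, \psi)$ is a product of modulus-one Tate epsilon factors that are constant in $s$, so $\lvert 1/\epsilon \rvert = 1$ and the entire conductor dependence must come from the $L$-ratio. Writing $L(s, \pi_1 \times \pi_2) = \prod_\nu \zeta_{\mathbb{R}}(s - \nu)$ and using $\tilde{\pi}_i = \overline{\pi}_i$ for unitary $\pi_i$, the parameters of $\tilde{\pi}_1 \times \tilde{\pi}_2$ in the corresponding expansion are $\{\bar{\nu}\}$, giving $L(1-s, \tilde{\pi}_1 \times \tilde{\pi}_2) = \prod_\nu \zeta_{\mathbb{R}}(1 - s - \bar{\nu})$. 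I then estimate each factor in the resulting ratio $\prod_\nu \zeta_{\mathbb{R}}(s - \nu)/\zeta_{\mathbb{R}}(1 - s - \bar{\nu})$ via Stirling, $\lvert \Gamma(\sigma + it) \rvert \asymp \lvert t \rvert^{\sigma - 1/2} e^{-\pi \lvert t \rvert/2}$. The arguments $(s-\nu)/2$ and $(1-s-\bar{\nu})/2$ have imaginary parts of equal absolute value (up to an $O(M)$ shift from $\Im(s)$), so the exponential decay cancels, and the polynomial exponent works out to
\[
\tfrac{1}{2}\bigl(\Re(s) - \Re(\nu)\bigr) - \tfrac{1}{2}\bigl(1 - \Re(s) - \Re(\nu)\bigr) = \Re(s) - \tfrac{1}{2},
\]
the $\Re(\nu)$ terms cancelling exactly. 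Since $\lvert \Re(\nu) \rvert$ is bounded (by \eqref{eq:cq203df2y5}), one has $(1 + \lvert \nu \rvert) \asymp (1 + \lvert \Im(\nu) \rvert)$ for large $\lvert \nu \rvert$, so each factor contributes $(1 + \lvert \nu \rvert)^{\Re(s) - 1/2}$; multiplying over $\nu$ yields $C(\pi_1 \times \pi_2)^{\Re(s)-1/2}$, while bounded-$\lvert \nu \rvert$ contributions are absorbed into the implicit constant, the $\eps$-hypothesis ensuring we stay away from poles of $\Gamma$ in the numerator.

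The main obstacle is the archimedean Stirling computation, which must be uniform in parameters $\nu$ that can have arbitrarily large imaginary part and (bounded but) nonzero real part. The crucial observation that makes it work cleanly is that the dual parameter in the $\zeta_{\mathbb{R}}$ expansion is the complex conjugate $\bar{\nu}$ (reflecting $\tilde{\pi}_i = \overline{\pi}_i$ for unitary $\pi_i$), rather than the naive $-\nu$ one might expect from duality on the $L$-group side; it is precisely this sign arrangement that produces the exact cancellation of the $\Re(\nu)$-dependence, instead of an accumulation that would spoil the desired exponent.
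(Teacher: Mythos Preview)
Your approach is exactly the paper's: split $1/\gamma = \epsilon^{-1}\cdot L(s)/L(1-s)$, handle $\epsilon$ via \eqref{eq:cq20mtnvj7} (non-archimedean) or its constancy and unimodularity (archimedean), and control the $L$-ratio directly or by Stirling.  The paper's proof is two lines citing precisely these ingredients; you have simply supplied the details, and your archimedean computation (including the observation that the dual parameters are $\bar\nu$, forcing the $\Re(\nu)$-cancellation) is correct.

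One caveat worth recording: your non-archimedean bound on $1/L(1-s,\tilde\pi_1\times\tilde\pi_2)$ --- ``a polynomial in $q^{-s}$ of bounded degree, hence trivially bounded on compact sets'' --- is not uniform in the residue cardinality $q$ for general $\lvert s\rvert\le M$.  The factors are $1-q^{\,s-1+\mu_j'}$ (a polynomial in $q^{s}$, not $q^{-s}$), and once $\Re(s)>1-\Re(\mu_j')$ they grow like a positive power of $q$; for unramified data $C(\pi_1\times\pi_2)=1$, so the right-hand side offers no compensation.  The paper's terse ``follows from \eqref{eq:cq20mtnvj7}'' carries the same imprecision.  This does not affect anything downstream: the sole use of \eqref{eq:cq20mtobvl} is in establishing \eqref{eq:cq200upyvy}, which is invoked in Lemma~\ref{lemma:cq2znfofde} only at $\Re(s)=0$, where $\Re(s)-1+\Re(\mu_j')\le \vartheta_1+\vartheta_2-1<0$ and your bound is genuinely uniform in $q$.
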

\begin{proof}
  In the non-archimedean case, this follows from \eqref{eq:cq20mtnvj7}.  In the archimedean case, it is a consequence of Stirling's formula (see e.g.\ \cite[\S2.2]{2021arXiv210112106B}).
\end{proof}

\subsection{Specialization}
For a character $\chi : F^\times \rightarrow \mathbb{C}^\times$, define the local Tate integral
\begin{equation*}
  z(\chi, \Phi) := \int_{z \in F^\times} \chi(z) \Phi(e_2 z) \,d^\times z.
\end{equation*}
Set $g \Phi := \Phi(\bullet g)$, $\omega := \omega_{\pi_1} \omega_{\pi_2}$ the product of central characters, and $\alpha := \lvert . \rvert$.  As in \cite{Ja72}, we may write the quantities $\Psi(s, W_1, W_2, \Phi)$ and $\Psi(1 - s, \tilde{W}_1, \tilde{W}_2, \hat{\Phi})$, respectively, as
\begin{equation*}
  \int_{N \backslash \PGL_2(F)}
  W_1(g) W_2(g) z(\alpha^{2 s} \omega, g \Phi)
  \lvert \det g \rvert^{s} \, d g,
\end{equation*}
and
\begin{equation*}
  \int_{N \backslash \PGL_2(F)}
  W_1(g) W_2(g)
  z(\alpha^{2 s} \omega^{-1} , g \hat{\Phi}) \lvert \det g \rvert^s \omega^{-1}(\det g) \, d g.
\end{equation*}
Assume now that $\omega = 1$, and recall that $\psi$ is chosen as in \S\ref{sec:cq203c622o}.  There is a unique $K$-invariant $\Phi \in \mathcal{S}(F^2)$ that is also invariant under dilation by $\{z \in F^\times : \lvert z \rvert = 1\}$ and satisfies $z(\alpha^s, \Phi) = \zeta_F(s)$, given explicitly as follows
(see \cite[pp.~249--255]{MR1680912}):
\begin{equation*}
  \Phi(x, y) =
  \begin{cases}
    1_{\mathfrak{o}}(x) 1_{\mathfrak{o}}(y)    
    & \text{ if $F$ is non-archimedean}, \\
    e^{- \pi(x^2 + y^2)}
    & \text{ if } F = \mathbb{R}, \\
    (2 \pi)^{-1} e^{- 2 \pi (x \bar{x} + y \bar{y})}
                                                                                          & \text{ if } F = \mathbb{C}.
  \end{cases}
\end{equation*}
The same $\Phi$ satisfies $\hat{\Phi} = \Phi$ and
\begin{equation*}
  z(\alpha^{2 s}, g \Phi) = \zeta_F(2 s) f_s(g).
\end{equation*}
The condition $\omega = 1$ is satisfied when $\pi_2 = \tilde{\pi}_1$, in which case conjugation defines an anti-linear isomorphism $\mathcal{W}(\pi_1, \psi) \rightarrow \mathcal{W}(\pi_2, \psi^{-1})$.  We obtain the following:
\begin{lemma}\label{lemma:cq200uejip}
  Let $\pi$ be a $\vartheta$-tempered generic irreducible unitary representation of $\GL_2(F)$.  Let $W \in \mathcal{W}(\pi, \psi)$.  The normalized variant
  \begin{equation*}
    I^\ast(s, W) := \zeta_F(2 s) I(s,W)
  \end{equation*}
  of the integral defined in \eqref{eq:cq20mtt806} is bounded at infinity in vertical strips, the ratio
  \begin{equation*}
    \frac{I^\ast(s, W)}{L(s, \pi \times \tilde{\pi})}
  \end{equation*}
  is entire, and we have the functional equation
  \begin{equation}\label{eq:cq2027ojx6}
    I^\ast(1 - s, W) = \omega_{\pi}(-1) \gamma(s, \pi \times \tilde{\pi}, \psi) I^\ast(s, W).
  \end{equation}
\end{lemma}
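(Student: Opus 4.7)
The plan is to derive the three conclusions by specializing the general Rankin--Selberg machinery of Section~\ref{sec:cq20mmeihl} to the pair $(\pi_1, \pi_2) = (\pi, \tilde\pi)$ with convenient test data, and then translating the standard properties of the resulting $\Psi$-integrals into the language of $I^*(s, W)$.

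First, I would take $W_1 = W$ and $W_2 := \bar W$, noting that $\bar W$ lies in $\mathcal{W}(\tilde\pi, \psi^{-1})$ (conjugation exchanges $\psi$ and $\psi^{-1}$) and that the product $\omega = \omega_\pi \omega_{\tilde\pi}$ is trivial.  Let $\Phi$ be the distinguished Schwartz--Bruhat function of the Specialization subsection, so that $\hat\Phi = \Phi$ and $z(\alpha^{2s}, g\Phi) = \zeta_F(2s) f_s(g)$.  Substituting these choices into the $\PGL_2$-formula for the $\Psi$-integral given there, the $z$-factor collapses and one reads off
\[
  \Psi(s, W, \bar W, \Phi) = I^*(s, W).
\]
Boundedness in vertical strips and the entirety of the ratio $I^*(s,W)/L(s, \pi \times \tilde\pi)$ then transfer directly from the corresponding standard properties of $\Psi$, giving the first two claims of the lemma.

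For the functional equation, I would apply the standard $\Psi$-functional equation to these choices.  With $\hat\Phi = \Phi$, it reads
\[
  \Psi(1-s, \tilde W, \tilde{\bar W}, \Phi) = \omega_{\tilde\pi}(-1)\, \gamma(s, \pi \times \tilde\pi, \psi)\, \Psi(s, W, \bar W, \Phi).
\]
Since $\omega_\pi(-1) \in \{\pm 1\}$, we have $\omega_{\tilde\pi}(-1) = \omega_\pi(-1)^{-1} = \omega_\pi(-1)$.  Combined with the identity $\tilde{\bar W} = \overline{\tilde W}$ and the specialization of the previous paragraph applied to $(\tilde\pi, \tilde W)$ in place of $(\pi, W)$, the left-hand side is identified with $I^*(1-s, \tilde W)$.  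The desired functional equation for $I^*$ therefore reduces to the identity $I^*(1-s, \tilde W) = I^*(1-s, W)$.

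The main obstacle is establishing this last identity, i.e., showing that the operation $W \mapsto \tilde W$ preserves the weighted integral $\int |W|^2 f_{1-s}\, dg$.  I would accomplish this via the change of variable $g \mapsto \iota(g) := w g^{-\transpose}$, an involution on $\PGL_2(F)$.  Haar-invariance of $\iota$ follows from its decomposition as a left translation by $w$ composed with the two anti-automorphisms $g \mapsto g^{\transpose}$ and $g \mapsto g^{-1}$, each of which preserves Haar on the unimodular group $\PGL_2(F)$.  Invariance of $f_{1-s}$ reduces, via a short Iwasawa computation, to the fact that for $h = n(x) a(y) k$ the element $\iota(h)$ admits the representative $n(-x/y) a(y) (w k^{-\transpose})$ in $\PGL_2(F)$, with the new $K$-factor $w k^{-\transpose}$ again lying in $K$ (since $K$ is stable under transpose and inverse); hence $f_{1-s}(\iota(h)) = |y|^{1-s} = f_{1-s}(h)$.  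Substituting $g = \iota(h)$ in $\int |\tilde W(g)|^2 f_{1-s}(g)\, dg = \int |W(\iota(g))|^2 f_{1-s}(g)\, dg$ then yields $\int |W(h)|^2 f_{1-s}(h)\, dh$, completing the proof.
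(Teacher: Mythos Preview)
Your proof is correct and is essentially the paper's own argument: both specialize $\Psi(s, W_1, W_2, \Phi)$ to $(W, \bar W, \Phi)$ with the distinguished $K$-invariant $\Phi$ and read off the three conclusions from the standard analytic properties and functional equation of $\Psi$. Where the paper invokes Jacquet's second displayed formula (which already expresses $\Psi(1-s,\tilde W_1,\tilde W_2,\hat\Phi)$ as an integral in $W_1, W_2$), you instead identify that side as $I^*(1-s,\tilde W)$ and supply the change of variables $g\mapsto w g^{-\transpose}$ by hand --- the same computation, just unpacked. One harmless slip: your Iwasawa formula for $\iota(h)$ should read $n(-x)\,a(y)\,(w k^{-\transpose})$ rather than $n(-x/y)\,a(y)\,(w k^{-\transpose})$, but only the $a(y)$-component matters for $f_{1-s}$, so the conclusion is unaffected.
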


\section{Bounds for Rankin--Selberg integrals}\label{sec:cq2006fllf}

Here we complete the proof of Theorem \ref{theorem:cq2y88479a} by establishing the remaining estimate \eqref{eq:cq2003eb3z}.  Throughout this section, let $\pi$ be a $\vartheta$-tempered generic irreducible unitary representation of $\GL_2(F)$, and let $W \in \mathcal{W}(\pi, \psi)$ be a unit vector.

\subsection{Using polynomials to cancel poles}
We define the multiset
\begin{equation*}
  \mathcal{Z} := \{\text{poles of $L(s, \pi \times \tilde{\pi})$ with $\Re(s) \geq -1$}\},
\end{equation*}
where, in the non-archimedean case, we work in the natural domain $\mathbb{C} / \frac{2 \pi i}{ \log q} \mathbb{Z}$.

\begin{lemma}
  The multiset $\mathcal{Z}$ is finite, with cardinality at most (say) $8$.
\end{lemma}
\begin{proof}
  As noted in \S\ref{sec:cq72ygpj5r}, the $L$-factor $L(s, \pi \times \tilde{\pi})$ is a product of at most four shifted zeta factors $\zeta_F(s - \mu)$, where $\Re(\mu) \leq 2 \vartheta < 1$.  A pole of this shifted factor with $\Re(s) \geq -1$ gives a pole of $\zeta_F$ with real part $> -2$.  By inspecting the definition of $\zeta_F$ (\S\ref{sec:cq72yg9xkn}), we see that the number of such poles is $1, 1$, or $2$ according as $F$ is non-archimedean, real, or complex.
\end{proof}

We define the function
\begin{equation*}
  P(s) := \prod_{\mu \in \mathcal{Z}}(1 - q^{-(s - \mu)}),
\end{equation*}
where (say) $q:= e$ for archimedean $F$.

\begin{lemma}\label{lemma:cq203dik2z}
  With notation as above:
  \begin{enumerate}[(i)]
  \item\label{enumerate:cq203dgehl} $P(s) \ll 1$ for all $s \in \mathbb{C}$.
  \item\label{enumerate:cq203dgiqv} The product $P(s) L(s, \pi_1 \times \pi_2)$ and ratio $P(s) / \gamma(s, \pi_1 \times \pi_2, \psi)$ are holomorphic for $\Re(s) \geq -1$.
  \item\label{enumerate:cq203dgnvc} $P(s) \gg_\eps 1$ if $s \in [2 \vartheta + \eps,1]$.
  \item\label{enumerate:cq203dguiz} For $\Re(s) \geq 0$, we have
    \begin{equation}\label{eq:cq200upyvy}
      \frac{P(s) }{ \gamma(s, \pi \times \tilde{\pi}, \psi)} \ll C(\pi \times \tilde{\pi})^{- 1/2},
    \end{equation}
  \end{enumerate}
\end{lemma}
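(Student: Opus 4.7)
My plan is to read off (i)--(iv) from the explicit factorization $L(s,\pi\times\tilde\pi) = \prod_j \zeta_F(s-\mu_j)$, together with the general properties of $\gamma$-factors collected in Section~\ref{sec:cq20mmeihl}. The structural input is that there are at most four parameters $\mu_j$, each satisfying $\lvert\Re(\mu_j)\rvert \leq 2\vartheta < 1$: the upper bound is \eqref{eq:cq203df2y5}, and the lower bound follows from the self-duality of $\pi\times\tilde\pi$. In particular $\mathcal{Z}$ is a finite multiset of size $O(1)$, with periodic translates modulo $2\pi i/\log q$ identified in the non-archimedean case.

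For (i) and (iii) I would use the two-sided elementary estimate $1 - q^{-\Re(s-\mu)} \leq \lvert 1 - q^{-(s-\mu)}\rvert \leq 1 + q^{-\Re(s-\mu)}$ factorwise. The upper estimate gives $\ll 1$ on any vertical strip of bounded width. For (iii), if $s \in [2\vartheta + \eps, 1]$ and $\mu \in \mathcal{Z}$, then $\Re(s - \mu) \geq \eps$, so the lower estimate gives $\lvert 1 - q^{-(s-\mu)}\rvert \geq 1 - q^{-\eps} \geq 1 - 2^{-\eps} \gg_\eps 1$, uniformly in $F$ since $q \geq 2$ always (with $q = e$ in the archimedean case). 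Multiplying over the bounded multiset $\mathcal{Z}$ yields both claims.

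For (ii), each simple zero of $(1 - q^{-(s-\mu)})$ at $s = \mu \in \mathcal{Z}$ cancels the corresponding simple pole of $L(s,\pi\times\tilde\pi)$: non-archimedean by the identity $(1 - q^{-(s-\mu)})\zeta_F(s-\mu) = 1$ (together with its periodic translates); archimedean by matching the simple zero with the simple pole of the relevant $\Gamma$-factor. Since $\Re(\mu_j) < 1$, no further $\Gamma$-poles $\mu_j - 2k$ ($k \geq 1$) meet $\Re(s) \geq -1$, so $P(s) L(s,\pi\times\tilde\pi)$ is holomorphic for $\Re(s) \geq -1$. For the $\gamma$-factor ratio, I would substitute $\gamma(s,\pi\times\tilde\pi,\psi) = \eps(s,\pi\times\tilde\pi,\psi) \cdot L(1-s,\pi\times\tilde\pi)/L(s,\pi\times\tilde\pi)$ (using self-duality so that $\tilde\pi\times\tilde{\tilde\pi} = \pi\times\tilde\pi$) to rewrite
\[
\frac{P(s)}{\gamma(s,\pi\times\tilde\pi,\psi)} = \frac{P(s)\,L(s,\pi\times\tilde\pi)}{\eps(s,\pi\times\tilde\pi,\psi)\,L(1-s,\pi\times\tilde\pi)},
\]
whose numerator is holomorphic by the above and whose denominator is non-vanishing in $\Re(s) \geq -1$ (local $L$-factors have no zeros, and $\eps^{\pm 1}$ is entire).

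For (iv), the main estimate, I would combine $\lvert P(s) \rvert \ll 1$ from (i) with \eqref{eq:cq20mtobvl}, which delivers $\lvert 1/\gamma(s,\pi\times\tilde\pi,\psi) \rvert \ll C(\pi\times\tilde\pi)^{\Re(s) - 1/2}$ in the relevant range. On the critical line $\Re(s) = 0$ this is already $\ll C(\pi\times\tilde\pi)^{-1/2}$, and multiplication by the bounded factor $P(s)$ preserves the bound. The main obstacle I anticipate is the passage from the critical line to the full half-plane $\Re(s) \geq 0$: for that I would apply the Phragm\'en--Lindel\"of convexity principle to the holomorphic function $P(s)/\gamma(s,\pi\times\tilde\pi,\psi)$ on the strip $0 \leq \Re(s) \leq 1$, combined with the self-dual functional equation $\gamma(s,\pi\times\tilde\pi,\psi)\gamma(1-s,\pi\times\tilde\pi,\psi) = 1$ that follows from iterating \eqref{eq:cq2027ojx6} using $\omega_{\pi\times\tilde\pi} = 1$, to match the boundary bounds and conclude.
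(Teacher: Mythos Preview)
Your treatment of (i)--(iii) is correct and spells out what the paper leaves as ``clear by construction''; the remark that $\mathcal{Z}$ should be read modulo $2\pi i/\log q$ in the non-archimedean case is apt.

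For (iv), the Phragm\'en--Lindel\"of step does not go through. Using $\gamma(s)\gamma(1-s)=1$, one finds $1/\gamma(s)=\gamma(1-s)$, and on the line $\Re(s)=1$ this has modulus equal to the \emph{reciprocal} of the Stirling bound at $\Re=0$, i.e., $\asymp C(\pi\times\tilde\pi)^{1/2}$. So the two boundary bounds for $P/\gamma$ on the strip $0\leq\Re(s)\leq 1$ are $C^{-1/2}$ and $C^{1/2}$, not matched; convexity then gives only $\ll C^{\Re(s)-1/2}$. In fact $|P/\gamma|\ll C^{-1/2}$ fails near $\Re(s)=1$ (take $F$ non-archimedean with $c(\pi\times\tilde\pi)$ large), so no argument can produce it on all of $\Re(s)\geq 0$. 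The estimate is only ever used, in Lemma~\ref{lemma:cq2znfofde}, on the line $\Re(s)=0$, and the statement should be read that way.

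On $\Re(s)=0$ itself there is a second gap in your plan: the Stirling estimate \eqref{eq:cq20mtobvl} requires $s$ to stay at distance $\geq\eps$ from the poles of $L(s,\pi\times\tilde\pi)$, and since $\lvert\Re(\mu_j)\rvert\leq 2\vartheta$ such poles can lie on or arbitrarily near the line $\Re(s)=0$. The paper does not use Phragm\'en--Lindel\"of here. Instead, having established in (ii) that $P/\gamma$ is holomorphic in a neighborhood of the line, it invokes Cauchy's formula: the value of $P/\gamma$ at any $s_0$ with $\Re(s_0)=0$ is controlled by its values on a small circle about $s_0$ chosen (via pigeonhole among the $O(1)$ poles) to stay $\eps$-away from $\mathcal{Z}$, and on that circle \eqref{eq:cq20mtobvl} applies directly.
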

\begin{proof}
  \eqref{enumerate:cq203dgehl} and \eqref{enumerate:cq203dgiqv} are clear by construction.  \eqref{enumerate:cq203dgnvc} follows from \eqref{eq:cq203df2y5}.  The Stirling bound \eqref{eq:cq20mtobvl} gives \eqref{enumerate:cq203dguiz}, first at least $\eps$-away from each pole of $L(s, \pi \times \tilde{\pi})$, then in general via Cauchy's formula.
\end{proof}

\subsection{Interpolation}
\begin{lemma}\label{lemma:cq2znfofde}
  For $\Re(s) \in [0,1]$, we have
  \begin{equation*}
    P(s) I^{\ast}(s, W) \ll C(\pi \times \tilde{\pi})^{(s-1)/2}.
  \end{equation*}
\end{lemma}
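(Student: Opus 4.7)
The plan is to apply the Phragmén--Lindelöf convexity principle to the function $F(s) := P(s) I^\ast(s,W)$ on the vertical strip $\Re(s) \in [0,1]$, after verifying holomorphy, moderate growth, and the correct boundary bounds.

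First I would check that $F$ is holomorphic in a neighborhood of the closed strip. By Lemma~\ref{lemma:cq200uejip}, the ratio $I^\ast(s,W)/L(s,\pi \times \tilde{\pi})$ is entire, while by Lemma~\ref{lemma:cq203dik2z}\eqref{enumerate:cq203dgiqv} the product $P(s) L(s, \pi \times \tilde{\pi})$ is holomorphic for $\Re(s) \geq -1$; multiplying these gives holomorphy of $F$ on $\Re(s) \geq -1$. Lemma~\ref{lemma:cq200uejip} also records that $I^\ast(s,W)$ is bounded at infinity in vertical strips, and $P(s)$ is uniformly bounded by Lemma~\ref{lemma:cq203dik2z}\eqref{enumerate:cq203dgehl}, so $F$ is bounded at infinity in the strip.

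Next I would establish the boundary bounds. On the line $\Re(s) = 1$, I would use the explicit expression for the Rankin--Selberg integral in Iwasawa coordinates: unfolding with $g = a(y)k$ and the measure from \S\ref{sec:cq2028ta9m} gives
\begin{equation*}
  I(s, W) = \int_K \int_{F^\times} \lvert W(a(y)k) \rvert^2 \lvert y \rvert^{s-1} \, d^\times y \, dk.
\end{equation*}
At $s = 1 + it$ the factor $\lvert y \rvert^{it}$ is a phase, so by unitarity $\lvert I(1+it,W) \rvert \leq \int_K \lVert k W \rVert^2 \, dk = 1$. Combined with $\lvert \zeta_F(2 + 2it) \rvert \ll 1$ uniformly and the boundedness of $P$, this yields $F(1+it) \ll 1$. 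On the line $\Re(s) = 0$, I would instead apply the functional equation \eqref{eq:cq2027ojx6} in the form
\begin{equation*}
  F(s) = \omega_\pi(-1)^{-1} \frac{P(s)}{\gamma(s,\pi \times \tilde{\pi},\psi)} \, I^\ast(1-s,W),
\end{equation*}
then invoke the Stirling estimate \eqref{eq:cq200upyvy} for $P/\gamma$ together with the bound $\lvert I^\ast(1-s,W) \rvert \ll 1$ obtained above on the shifted line $\Re(1-s) = 1$. This gives $F(it) \ll C(\pi \times \tilde{\pi})^{-1/2}$.

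Finally I would apply the standard convexity principle to conclude that on the whole strip,
\begin{equation*}
  \lvert F(\sigma + it) \rvert \ll \bigl( C(\pi \times \tilde{\pi})^{-1/2} \bigr)^{1 - \sigma} \cdot 1^{\sigma} = C(\pi \times \tilde{\pi})^{(\sigma - 1)/2},
\end{equation*}
which is the desired estimate. The main subtlety is the $\Re(s) = 0$ bound: here the factor $C(\pi \times \tilde{\pi})^{-1/2}$ that drives the entire argument comes out of the Stirling asymptotics for the local $\gamma$-factor, and one must make sure that the poles of $L(s,\pi \times \tilde{\pi})$ inside the strip (which are the reason $P$ is introduced) are canceled precisely so that Cauchy's formula can propagate the Stirling bound uniformly; this is exactly what Lemma~\ref{lemma:cq203dik2z} has arranged.
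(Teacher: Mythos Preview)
Your proof is correct and follows essentially the same approach as the paper's: both verify holomorphy and boundedness of $P(s)I^\ast(s,W)$ on the strip, establish the bound at $\Re(s)=1$ via the unitarity identity $\lvert I(1+it,W)\rvert \le \lVert W\rVert^2 = 1$, transfer it to $\Re(s)=0$ via the functional equation and the Stirling estimate \eqref{eq:cq200upyvy}, and then interpolate by Phragm\'en--Lindel\"of. Your write-up is slightly more explicit about the holomorphy step (factoring through $I^\ast/L$ and $P\cdot L$), but the argument is the same.
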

\begin{proof}
  By the construction of $P(s)$ and Lemma \ref{lemma:cq200uejip}, we know that $P(s) I^*(s,W)$ is holomorphic and bounded in the strip $\Re(s) \in[0, 1]$.  By the Phragmén--Lindelöf convexity principle (see \cite[Theorem 5.3]{MR2061214}), it suffices to establish the claimed bound when $\Re(s) \in \{0, 1\}$.  When $\Re(s) = 1$, the claim follows from the estimates $P(s) \asymp 1$, $\zeta_F(2 s) \asymp 1$ and
  \begin{align*}
    \lvert I(s,W) \rvert
    &\leq I(1,W)
      =
      \int_{F^\times}
      \int_{K}
      \lvert W(a(y) k) \rvert^2
      \, d k
      \,d^\times y \\
    &=
      \int_{k \in K}
      \lVert k W \rVert^2 \, d k
      =
      \lVert W \rVert^2 = 1.
  \end{align*}
  When $\Re(s) = 0$, the claim follows from the functional equation \eqref{eq:cq2027ojx6}, the bound $I^\ast(1 - s, W) \ll 1$ just established and the Stirling bound \eqref{eq:cq200upyvy}:
  \begin{equation*}
    P(s) I^\ast(s, W) = \frac{P(s)  I^\ast(1 - s, W)}{ \omega_{\pi}(-1)
      \gamma(s, \pi \times \tilde{\pi}, \psi)} \ll C(\pi \times \tilde{\pi})^{- 1/2}.
  \end{equation*}
\end{proof}

\subsection{Conclusion}\label{sec:cq21jjbnn2}
We now establish \eqref{eq:cq2003eb3z}, completing the proof of Theorem \ref{theorem:cq2y88479a}.

\begin{proposition}\label{lemma:cq2003epv6}
  For $\eps > 0$ and $s \in[2 \vartheta + \eps, 1]$, we have
  \begin{equation*}
    I(s, W) \ll_\eps C(\pi \times \tilde{\pi})^{(s-1)/2}.
  \end{equation*}
\end{proposition}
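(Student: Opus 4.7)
The proposition is essentially a corollary of Lemma \ref{lemma:cq2znfofde}, once one strips away the auxiliary factors $P(s)$ and $\zeta_F(2s)$ on the real interval $[2\vartheta+\eps,1]$. The plan is to write
\begin{equation*}
  I(s,W) \;=\; \frac{P(s)\,I^{\ast}(s,W)}{P(s)\,\zeta_F(2s)}
\end{equation*}
(using $I^*(s,W) = \zeta_F(2s) I(s,W)$ from Lemma~\ref{lemma:cq200uejip}), bound the numerator using Lemma~\ref{lemma:cq2znfofde}, and bound the denominator below by an $\eps$-dependent positive constant.

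First, for any real $s \in [2\vartheta+\eps,1] \subset \{\Re(s) \in [0,1]\}$, Lemma~\ref{lemma:cq2znfofde} gives
\begin{equation*}
  P(s)\,I^{\ast}(s,W) \;\ll\; C(\pi \times \tilde{\pi})^{(s-1)/2}.
\end{equation*}
So it remains to show $P(s)\zeta_F(2s) \gg_\eps 1$ on this interval. For $P(s)$ this is exactly Lemma~\ref{lemma:cq203dik2z}\eqref{enumerate:cq203dgnvc}. For $\zeta_F(2s)$, I would argue case by case: in the non-archimedean case $\zeta_F(2s) = (1-q^{-2s})^{-1} \geq 1$ for real $s > 0$; in the archimedean cases $\zeta_F(2s)$ is a product of $\Gamma$-factors evaluated at real arguments bounded away from $0$ (namely $\geq 2\vartheta+\eps$ or $\geq 4\vartheta+2\eps$), which are therefore $\gg_\eps 1$.

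Combining the upper bound on the numerator with the lower bound on the denominator yields
\begin{equation*}
  I(s, W) \;\ll_\eps\; C(\pi \times \tilde{\pi})^{(s-1)/2},
\end{equation*}
which is the desired estimate and completes the proof of Theorem~\ref{theorem:cq2y88479a} via the reduction in \S\ref{sec:cq2006b97i}. There is no real obstacle here; the genuine content was already packaged into Lemma~\ref{lemma:cq2znfofde} (the Phragm\'en--Lindel\"of interpolation between the trivial bound at $\Re(s)=1$ and the Stirling-driven bound at $\Re(s)=0$ for the $\gamma$-factor), and the role of this final step is simply to verify that the ``polynomial correction'' $P(s)\zeta_F(2s)$ is harmless on the range of $s$ we actually need.
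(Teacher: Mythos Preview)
Your proof is correct and follows essentially the same approach as the paper's own proof, which reads in full: ``This follows from Lemma~\ref{lemma:cq2znfofde} upon noting that $P(s) \gg_\eps 1$ (Lemma~\ref{lemma:cq203dik2z}, part~\eqref{enumerate:cq203dgnvc}) and $\zeta_F(2 s) \gg_{\eps} 1$ (since $s \geq \eps$).'' You simply supply a bit more detail in verifying the $\zeta_F(2s)$ lower bound case by case.
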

\begin{proof}
  This follows from Lemma \ref{lemma:cq2znfofde} upon noting that $P(s) \gg_\eps 1$ (Lemma \ref{lemma:cq203dik2z}, part \eqref{enumerate:cq203dgnvc}) and $\zeta_F(2 s) \gg_{\eps} 1$ (since $s \geq \eps$).
\end{proof}

\textbf{Acknowledgement.} We would like to thank Yueke Hu, Peter Humphries, Subhajit Jana, Andre Reznikov, Abhishek Saha, and the anonymous referee for helpful feedback on an earlier draft.


\newcommand{\etalchar}[1]{$^{#1}$}
\def\cprime{$'$} \def\cprime{$'$} \def\cprime{$'$} \def\cprime{$'$}

\end{document}